\begin{document}

\makeatletter
\newcommand{\dmn}{{\Omega}}
\newcommand{\dir}{{\Gamma_{D}}}
\newcommand{\neu}{{\Gamma_{N}}}
\newcommand{\rob}{{\Gamma_{R}}}
\newcommand{\norm}[1]{\left\lVert#1\right\rVert}
\newcommand\abs[1]{\left|#1\right|}
\newcommand{\DIV}[1]{\mbox{div}\left(#1\right)}
\newcommand{\TwoNorm}[2]{{\left\lVert#1\right\rVert}_{L^2(#2)}}
\newcommand{\VNorm}[2]{{\left\lVert#1\right\rVert}_{V,#2}}
\newcommand{\ddiv}{\operatorname{div}}
\newcommand{\supp}{\operatorname{supp}}
\newcommand{\G}{\mathcal{G}}
\newcommand{\N}{\mathcal{N}}
\newcommand{\Cor}{\mathcal{C}}
\newcommand{\cS}{\mathcal{S}}
\newcommand{\CIh}{C_{I_h}}
\newcommand{\CIH}{C_{I_H}}
\newcommand{\Col}{C_{\mathrm{ol}}}
\newcommand{\Colm}{C_{\mathrm{ol},m}}
\newcommand{\nei}{\mathsf{N}}
\newenvironment{rev}{\begin{color}{blue}}{\end{color}}
\makeatother

\title*{Multiscale Petrov-Galerkin Method for 
      High-Frequency Heterogeneous Helmholtz Equations}
\titlerunning{Multiscale Method for Heterogeneous Helmholtz Equation}

\author{Donald L. Brown
        \and Dietmar Gallistl
        \and Daniel Peterseim
}

\institute{
 Donald L. Brown
 \at
 School of Mathematical Sciences,
 The University of Nottingham,
 University Park,
 Nottingham, United Kingdom, \email{donald.brown(at)nottingham.ac.uk}
 \and
 Dietmar Gallistl
 \at
 Institut f\"ur Numerische Simulation, 
 Universit\"at Bonn, 
 Wegelerstr.~6,
 53115 Bonn, Germany, \email{gallistl(at)ins.uni-bonn.de}
 \and
 Daniel Peterseim
 \at
 Institut f\"ur Numerische Simulation, 
 Universit\"at Bonn, 
 Wegelerstr.~6,
 53115 Bonn, Germany, \email{peterseim(at)ins.uni-bonn.de}
}

\maketitle

\abstract{This paper presents a multiscale Petrov-Galerkin
finite element method for time-harmonic acoustic scattering problems with heterogeneous coefficients
in the high-frequency regime. We show that the method is pollution-free
also in the case of heterogeneous media
provided that the stability bound of the continuous problem grows at most 
polynomially with the wave number $k$.
By generalizing classical estimates of 
[Melenk, Ph.D.\ Thesis 1995] and
[Hetmaniuk, Commun.\ Math.\ Sci.\ 5 (2007)] for homogeneous medium, 
we show that this assumption of polynomially  wave number growth  holds
true for a particular class of smooth
heterogeneous material coefficients. Further, we present numerical examples to verify our stability estimates and implement an example
 in the  wider class of discontinuous coefficients to show computational applicability beyond our limited class of coefficients. 
}

\begin{acknowledgement}
The authors acknowledge the support given by the Hausdorff Center
for Mathematics Bonn. D.~Peterseim is supported by 
Deutsche Forschungsgemeinschaft in the Priority Program 1748 
``Reliable simulation techniques in solid mechanics. 
Development of non-standard discretization methods, mechanical 
and mathematical analysis'' under the project 
``Adaptive isogeometric modeling of propagating strong discontinuities
in heterogeneous materials''.
\end{acknowledgement}

\section{Introduction}\label{DONALD_BROWN:introduction}

The time-harmonic acoustic wave-propagation is customarily described
by the Helmholtz equation, which is of second-order, elliptic,
but indefinite. 
Its numerical solution therefore exhibits
severe difficulties especially in the regime of high wave numbers
$k$. 
It is well-known 
that the mesh size $h$ required for the stability of a standard
finite element method must be much smaller than a mesh size
$H$ which would be sufficient for a reasonable representation
of the solution. The phenomenon that the ratio $H/h$ tends to
infinity as $k$ grows, is known as the \emph{pollution effect}
\cite{DONALD_BROWN:Babuska:2000:PEF:354138.354150}.
A method is referred to as pollution-free, if $h$ and $H$ have the
same order of magnitude and so proper resolution of the solution
-- usually a certain fixed number of grid points per wave length --
implies quasi-optimality of the method.

When studying acoustic wave-propagation, it is often assumed to have constant material properties such as density and speed of sound, 
while in real complex materials, such as composites, these may be
heterogeneous.
Therefore, in this paper we study a multiscale Petrov-Galerkin method
for the Helmholtz equation with large wave numbers
$k$ and possibly heterogeneous material coefficients
as a generalization of
\cite{DONALD_BROWN:Gallistl.Peterseim:2015,DONALD_BROWN:Peterseim2014}.
Standard first-order piecewise polynomials on the scale $H$
serve as trial functions in this method, whereas the test functions
involve a correction by solutions to coercive cell problems on the 
scale $h$.
The size of the cells is proportional to $H$, where the proportionality
constant $m$ ---the oversampling parameter--- can be adjusted.
Typically $m\approx\log k$, depending on the stability of the 
problem, leads to a quasi-optimal method.
These local problems are translation invariant. Therefore, in
periodic media only a small number of corrector problems must
 be solved depending on the number of local mesh configurations.

The stability of the method requires that the stability constant
of the continuous operator depends polynomially on $k$. 
Such results are very rare in the literature even for the case of homogeneous media. 
We shall emphasize that such an assumption does not hold true in general \cite{DONALD_BROWN:betcke}.
The first positive estimates of this type go back to
\cite{DONALD_BROWN:melenk_phd} for convex planar domains with pure
Robin boundary. They were later generalized to other settings
and three spatial dimensions in
\cite{DONALD_BROWN:feng,DONALD_BROWN:hetmaniuk}.
For instance, in the particular case of pure impedance boundary conditions with
$\partial\Omega=\Gamma_R$, it was proved in 
\cite{DONALD_BROWN:feng,DONALD_BROWN:MelenkEsterhazy,DONALD_BROWN:melenk_phd}, 
by employing a technique of \cite{DONALD_BROWN:makridakis}, that the inf-sup constant is bounded, i.e.
$\gamma(k,\Omega,A,V^2)\lesssim k$.
Further setups allowing for polynomially well-posedness in the presence of a single star-shaped sound-soft scatterer are described in
\cite{DONALD_BROWN:hetmaniuk}. 
For multiple scattering and, in particular, for scattering in heterogeneous media, the situation is completely open. 
To show that the assumption is satisfiable  for non-trivial heterogeneous media, in this work we determine a class of smooth heterogeneous coefficients
that allow for explicit-in-$k$ stability estimates. 

\subsection{Heterogeneous Helmholtz Problem}

We begin with some standard notation on complex-valued Lebesgue and Sobolev spaces
that applies throughout this paper. 
The bar indicates
complex conjugation and $i$ is the imaginary unit.
The $L^2$ inner product is denoted
by 
$(v,w)_{L^2(\Omega)}:=\int_\Omega v\bar w\,dx$. 
The Sobolev space of complex-valued $L^p$ functions over a
domain $\omega$ whose 
generalized derivatives up to order $l$ belong to $L^p$ is
denoted by $W^{l,p}(\omega)$ and $H^l(\omega):=W^{l,2}(\omega)$.
Further, the notation $A\lesssim B$ abbreviates $A\leq C B$ for some
constant $C$ that is independent of the mesh-size,
the wave number $k$, and all further parameters in the method like
the oversampling parameter $m$ or the fine-scale mesh-size $h$;
$A\approx B$ abbreviates $A\lesssim B\lesssim A$.

 We now begin with some notation and problem setting. Let $\Omega\subset \mathbb{R}^d$ be an open  bounded Lipschitz domain with polyhedral boundary for $d\in\{1,2,3\}$.  
We wish to find a solution $u$ that satisfies 
\begin{align}\label{DONALD_BROWN:main}
 -\DIV{A(x) \nabla u}-k^2V^2(x)u&=f \text{ in } \dmn,
\end{align}
along with the boundary conditions
\begin{subequations}\label{DONALD_BROWN:BCs}
\begin{align}
 \label{DONALD_BROWN:dirichlet}
  u&=0 \text{ on } \dir,\\
 \label{DONALD_BROWN:neumann}
A(x)\nabla u \cdot \nu &=0 \text{ on } \neu ,\\
 \label{DONALD_BROWN:robin}
A(x)\nabla u \cdot \nu -i k\beta(x)u&=g \text{ on } \rob. 
\end{align}
\end{subequations}
Here, $\nu$ denotes the outer normal to  $\partial \Omega=\overline{\dir \cup \neu \cup \rob}$, where the boundary sections are assumed disjoint. We suppose that $|\rob|>0$, but allow the other portions of the boundary to have measure zero. 
Although the results in this paper hold for a weaker dual space here we suppose $f\in L^2({\Omega})$ and   $g\in L^2({\rob})$.
For the coefficients, we suppose
$A(x), V^2(x)\in W^{1,\infty}(\dmn)$, and  $\beta(x) \in L^{\infty}(\dmn)$ are real valued. 
Moreover, we suppose there exist positive  constants $A_{min}, A_{max}, \beta_{min},\beta_{max},V_{min}$, and $V_{max}$ independent of $k$  such that for almost all $x\in\dmn$ we have
\begin{subequations}\label{DONALD_BROWN:upperbounds}
\begin{align}
A_{min} \leq&A(x)\leq A_{max},\\
\beta_{min}\leq&\beta(x)\leq \beta_{max},\\
V^2_{min} \leq&V^2(x)\leq V^2_{max}.
\end{align}
\end{subequations}
We denote the space 
$$V:=\{u\in H^1(\dmn)\;|\;u=0 \text{ on }  \dir\}$$
 and denote the  norm weighted with $A(x),V(x),$ and $k$ to be for $\omega\subset\Omega$
\begin{align}
\VNorm{u}{\omega}:=\sqrt{ \TwoNorm{k V u}{\omega}^2+\TwoNorm{A^{\frac{1}{2}}\nabla u}{\omega}^2},
\end{align}
where if $\omega=\Omega,$ we simply write $\norm{u}_{V}$.
We have the following variational form corresponding to \eqref{DONALD_BROWN:main}: Find $u\in V$ such that  
\begin{align}\label{DONALD_BROWN:var}
a(u,v)=(f,v)_{L^2(\dmn)}+(g,v)_{L^2(\rob)}\text{ for all }v\in V,
\end{align}
where the complex-valued sesquilinear form $a : V\times V\to \mathbb{C}$ is given by 
\begin{align}\label{DONALD_BROWN:varform}
a(u,v)=(A(x) \nabla u,\nabla v)_{L^2(\dmn)}-(k^2 V^2(x) u ,v)_{L^2(\dmn)}- (i k \beta(x) u,v)_{L^2(\rob)}.
\end{align}
Here we write $(u,v)_{L^2(\dmn)}=\int_{\dmn}u \bar{v}dx$ and similarly $(u,v)_{L^2(\rob)}=\int_{\rob}u \bar{v}ds$.

\subsection{Motivation for a Multiscale Method and Stability Analysis}

It is well known
\cite{DONALD_BROWN:Babuska:2000:PEF:354138.354150}
that the pollution effect cannot be avoided in standard methods.
However, it may be overcome by coupling the polynomial degree
of the method with the wave number $k$ 
\cite{DONALD_BROWN:mm_stas_helm3}.
Therefore, multiscale methods appear to be a natural tool to 
incorporate fine-scale features in a low-order discretization.
Moreover, the parameters of this method must be coupled
logarithmically with the wave number and therefore require the stability constant of the continuous problem
to be polynomially dependent of $k$ to arrive at a computationally efficient method.
Hence,
the stability of the continuous heterogeneous problem \eqref{DONALD_BROWN:main} is critical to the analysis of the related algorithms. 
In general, it is often shown  (or possibly assumed) that there
exists some constant  $C_{stab}(k,\Omega,A,V^2)>0,$ which depends on $k$, the geometry, and the coefficients,  such that 
\begin{equation}\label{DONALD_BROWN:upperstabilityestimate}
\norm{u}_V\leq C_{stab}(k,\Omega,A,V^2)\left(\norm{f}_{L^2(\Omega)}+\norm{g}_{L^2(\rob)} \right).
\end{equation}
Further, turning to the inf-sup type lower bound, it is often shown,  or possibly assumed, that there
exists some constant $\gamma(k,\Omega,A,V^2)$, related to $C_{stab}(k,\Omega,A,V^2)$,  such that 
\begin{equation}\label{DONALD_BROWN:e:helmholtzstability}
\gamma(k,\Omega,A,V^2)^{-1}
  \leq
  \inf_{v\in V\setminus\{0\}} \sup_{w\in V\setminus\{0\}} 
   \frac{\operatorname{Re} a(v,w)}{\|v\|_V\|w\|_V} .
\end{equation}
As noted, it is often the case that these constants depend merely polynomially on $k$. 
However, it has been demonstrated that there are special instances of exponential $k$ dependence on $C_{stab}(k,\Omega,A,V^2)$  \cite{DONALD_BROWN:betcke}, and thus, highly unstable inf-sup constants 
$\gamma(k,\Omega,A,V^2)$. 

\section{Stability of the Heterogeneous Helmholtz Model}\label{DONALD_BROWN:stability}


As discussed in Section \ref{DONALD_BROWN:introduction}, the stability and regularity of the continuous problem has been investigated  for constant coefficients in various contexts.
In this section, we shall investigate the stability of the continuous problem with respect to wave number in the case of heterogeneous coefficients. 
We proceed using the variational techniques
 with geometric constraints \cite{DONALD_BROWN:hetmaniuk}.

As noted in Section \ref{DONALD_BROWN:introduction}, in the case of 
constant coefficients, there exist various methods to bound 
$\gamma(k,\Omega,A,V^2)$ from \eqref{DONALD_BROWN:e:helmholtzstability} 
in terms of $k$. 
Most importantly, the possible exponential dependence discussion in  \cite{DONALD_BROWN:betcke}, will be 
excluded here.
We will show in this section, that for certain classes of coefficients, we are able to obtain a favorable polynomial
bound for $\gamma(k,\Omega,A,V^2)$. To this end, we will employ variational techniques and so-called Rellich type identities with restrictions on the types of geometries similar to 
the work of \cite{DONALD_BROWN:hetmaniuk} and references therein.

As we use the variational techniques we will 
 make the geometric assumptions made by \cite{DONALD_BROWN:hetmaniuk}. That is we suppose that there exists a $x_{0}\in \mathbb{R}^d$ and a $\eta>0$  such that  
\begin{subequations}\label{DONALD_BROWN:GeomAssum}
\begin{align}
(x-x_{0})\cdot \nu &\leq 0 \text{ on } \dir,\\
(x-x_{0})\cdot \nu& = 0 \text{ on } \neu,\\
(x-x_{0})\cdot \nu &\geq \eta \text{ on } \rob.
\end{align}
\end{subequations}
For a summary of such possible domains, we refer the reader to \cite{DONALD_BROWN:hetmaniuk}. However, to get some sense of a geometry the reader may envision a convex domain with pure 
 impedance boundary conditions. This of course may be weakened. 

\subsection{Statement of Stability, Connections to Inf-Sup Constants, and Boundedness}

In this section we present our main stability result. The variational techniques employed require assumptions on the class of coefficients to remain valid. 
We outline these constraints and obtain a bounded-in-$k$ result. We further relate these to the inf-sup constants and explore the boundedness of the non-constant coefficient case. 

\begin{theorem}\label{DONALD_BROWN:Theorem1}
Suppose $\Omega\subset \mathbb{R}^d$, is a bounded connected Lipschitz domain and   satisfies the geometric  assumptions \eqref{DONALD_BROWN:GeomAssum}. 
Let $u$ be a solution of \eqref{DONALD_BROWN:main} with the boundary conditions \eqref{DONALD_BROWN:BCs}, coefficients  satisfying the bounds \eqref{DONALD_BROWN:upperbounds}, and $k\geq k_{0}>0$, for some $k_{0}$.
Further, we suppose the regularity $u\in H^{3/2+\delta}(\dmn)$ 
for some $\delta>0$.

Define the following function
\begin{align}\label{DONALD_BROWN:Sfunction.theorem}
S(x):=\ddiv\left(\left(\frac{V^2(x)}{A(x)}\right) (x-x_{0})\right)
\end{align}
and further, we will denote $C_{G}$ to be the minimal constant so that 
\begin{align}
2 \abs{\int_{\dmn}\left(\frac{\nabla A}{A}\right)\nabla u ((x-x_{0}) \cdot \nabla \bar{u}) dx}\leq C_{G}\norm{\left(\frac{\nabla A}{A}\right)}_{L^{\infty}(\dmn)}\TwoNorm{\nabla u}{\dmn}^2.
\end{align}
We suppose that
\begin{subequations}  \label{DONALD_BROWN:conditions}
\begin{align}
\label{DONALD_BROWN:conditions.1}
&S_{min}=\min_{x\in \dmn}S(x)>0, \\
\label{DONALD_BROWN:conditions.2}
&S_{min}-\left((d- 2) +C_{G}\norm{\left(\frac{\nabla A}{A}\right)}_{L^{\infty}(\dmn)}\right)\frac{V_{max}^2}{A_{min}}>0.
\end{align}
\end{subequations}
We then have the following estimate
\begin{align}\label{DONALD_BROWN:mainestimate.theorem}
\norm{u}^2_V \leq C^*\left(1+\frac{1}{k^2}\right)\left(\TwoNorm{f}{\dmn}^2+\TwoNorm{g}{\rob}^2\right),
\end{align}
where $C^*$ depends only on the \eqref{DONALD_BROWN:upperbounds} and $\dmn$, but not on $k$.
\end{theorem}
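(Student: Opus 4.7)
The plan is to follow the Rellich-multiplier strategy of Melenk and Hetmaniuk, adapted here to variable coefficients. Two test-function identities are combined: the energy identity from $v=u$ and a Rellich-type identity from a first-order multiplier; the geometric assumptions \eqref{DONALD_BROWN:GeomAssum} together with the structural assumptions \eqref{DONALD_BROWN:conditions} then deliver strictly positive coefficients in front of the norms to be controlled.

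First, testing \eqref{DONALD_BROWN:var} with $v=u$, the real part yields $\TwoNorm{A^{1/2}\nabla u}{\dmn}^2-\TwoNorm{kVu}{\dmn}^2=\mathrm{Re}[(f,u)_{L^2(\dmn)}+(g,u)_{L^2(\rob)}]$, so a bound on $\TwoNorm{kVu}{\dmn}^2$ together with the data controls the full $V$-norm. The imaginary part, combined with $\beta\geq\beta_{min}>0$ and Young's inequality, bounds $\TwoNorm{u}{\rob}^2$ by the data plus a small multiple of $\norm{u}_V^2$; this is what eventually lets us absorb the Robin boundary contributions.

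Next I would multiply the strong form \eqref{DONALD_BROWN:main} by $A(x)^{-1}\overline{(x-x_{0})\cdot\nabla u}$, integrate over $\dmn$ (the assumed $H^{3/2+\delta}$ regularity makes all boundary traces meaningful), and take twice the real part. Integration by parts on the principal part $-\DIV{A\nabla u}$ produces $(d-2)\TwoNorm{\nabla u}{\dmn}^2$, boundary contributions weighted by $(x-x_{0})\cdot\nu$, and a bulk remainder $-2\mathrm{Re}\int_\dmn(\nabla A/A)\cdot\nabla u\,\overline{(x-x_{0})\cdot\nabla u}\,dx$ arising from differentiating the factor $1/A$. On the zero-order side, the identity $2\mathrm{Re}(u\,\overline{(x-x_{0})\cdot\nabla u})=(x-x_{0})\cdot\nabla|u|^2$ and one further integration by parts produce exactly $\int_\dmn k^2 S(x)|u|^2\,dx$, with $S$ as in \eqref{DONALD_BROWN:Sfunction.theorem}, plus a further boundary integral. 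The three boundary parts are then handled using \eqref{DONALD_BROWN:GeomAssum}: on $\dir$, where $u=0$ and $\nabla u=(\partial_\nu u)\nu$, the net contribution collapses to $\int_{\dir}(x-x_{0})\cdot\nu|\partial_\nu u|^2\,ds\leq 0$ and may be discarded; on $\neu$, the conditions $(x-x_{0})\cdot\nu=0$ and $A\partial_\nu u=0$ annihilate everything; on $\rob$ one inserts the Robin condition $A\partial_\nu u = ik\beta u + g$ and estimates the resulting products by Young's inequality with $k$-scaled weights, using the $\rob$-bound from the first step and a trace bound on the tangential gradient. The bulk $\nabla A$ remainder is controlled directly by $C_{G}\norm{\nabla A/A}_{L^\infty(\dmn)}\TwoNorm{\nabla u}{\dmn}^2$ via the defining inequality for $C_{G}$.

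Collecting everything, \eqref{DONALD_BROWN:conditions.1} gives $\int_\dmn k^2 S(x)|u|^2\,dx\geq (S_{min}/V_{max}^2)\TwoNorm{kVu}{\dmn}^2$, while the energy identity from the first step replaces $\TwoNorm{\nabla u}{\dmn}^2$ by $A_{min}^{-1}\TwoNorm{kVu}{\dmn}^2$ plus data. Assumption \eqref{DONALD_BROWN:conditions.2} then produces a strictly positive net coefficient in front of $\TwoNorm{kVu}{\dmn}^2$, which yields its bound and, through the energy identity, the full $V$-norm estimate \eqref{DONALD_BROWN:mainestimate.theorem}. The main obstacle is the bulk $\nabla A$ term, which is absent in the constant-coefficient theory: it contributes an irreducible multiple of $\TwoNorm{\nabla u}{\dmn}^2$ to the Rellich identity that can only be absorbed via the energy identity, which is precisely why \eqref{DONALD_BROWN:conditions.2} features the combined factor $(d-2)+C_{G}\norm{\nabla A/A}_{L^\infty(\dmn)}$ rather than just $(d-2)$. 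A secondary technical point is treating the $\rob$ integrals without spoiling the polynomial $k$-dependence, which pins down the particular scaling of the Young weights.
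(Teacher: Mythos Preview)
Your proposal is correct and follows essentially the same Rellich--multiplier strategy as the paper: combine the real and imaginary parts of $a(u,u)$ (the paper's auxiliary Lemmas~\eqref{DONALD_BROWN:Lemmaest1}--\eqref{DONALD_BROWN:Lemmaest2}) with the identity obtained from the multiplier $(x-x_0)\cdot\nabla\bar u$ after rewriting $-\Delta u = A^{-1}(f+k^2V^2u+\nabla A\cdot\nabla u)$, then use \eqref{DONALD_BROWN:GeomAssum} on the boundary pieces and \eqref{DONALD_BROWN:conditions} to get a strictly positive coefficient in front of $k^2\TwoNorm{u}{\dmn}^2$. One small clarification: on $\rob$ the paper does not need a trace bound on the tangential gradient---the Rellich identity itself delivers $\eta\TwoNorm{\nabla u}{\rob}^2$ on the favorable side (from $(x-x_0)\cdot\nu\geq\eta$), and the cross terms $2\operatorname{Re}\int_{\rob}(ik\beta u+g)(m\cdot\nabla\bar u)\,ds$ are absorbed into it via Young's inequality.
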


\begin{proof}
See Appendix below.\qed
\end{proof}

\begin{remark}
The assumption from Theorem~\ref{DONALD_BROWN:Theorem1} that $u$
satisfy the regularity $u\in H^{3/2+\delta}(\dmn)$ is an assumption
on the configuration of the boundary decomposition into
$\Gamma_D$, $\Gamma_N$, $\Gamma_R$. It is not a further
restriction on the coefficients $A$ or $V^2$.
\end{remark}

Now that we have an explicit bound for a class of constant variable coefficients, we now will relate the constant $C_{stab}(k,\Omega,A,V^2):=C^*\left(1+\frac{1}{k^2}\right)$ 
to $\gamma(k,\Omega,A,V^2)$ given by  \eqref{DONALD_BROWN:e:helmholtzstability}.

\begin{theorem}\label{DONALD_BROWN:lowerstability}
Supposing the assumptions in Theorem \ref{DONALD_BROWN:Theorem1}, 
 we have the following estimate
\begin{equation}\label{DONALD_BROWN:lowerstabilityestimate}
k^{-1} \lesssim \widetilde\gamma^{-1}
  \lesssim 
  \inf_{v\in V\setminus\{0\}} \sup_{w\in V\setminus\{0\}} 
   \frac{\operatorname{Re} a(v,w)}{\|v\|_V\|w\|_V} .
\end{equation}
Where, $\widetilde\gamma:=(1+  C^*\left(k+\frac{1}{k}\right)V_{max}^2)$.
\end{theorem}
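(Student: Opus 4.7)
The plan is a classical Schatz--Melenk-type duality argument. For each $v\in V\setminus\{0\}$, I will exhibit an explicit test function $w\in V$ satisfying both $\operatorname{Re} a(v,w)\ge\|v\|_V^{2}$ and $\|w\|_V\lesssim\widetilde\gamma\,\|v\|_V$; the right-hand inequality then follows by dividing and taking the infimum over $v$. The candidate is $w:=v+z$, where $z\in V$ is defined as the solution of the adjoint variational problem
\[
 a(\phi,z)=2k^{2}(\phi,V^{2}v)_{L^{2}(\Omega)}\qquad\text{for all }\phi\in V.
\]
Since the coefficients $A,V^{2},\beta$ are real-valued, conjugating the above equation and substituting $\bar\phi$ in place of $\phi$ shows that $\bar z$ solves a boundary value problem of the form \eqref{DONALD_BROWN:main}--\eqref{DONALD_BROWN:BCs} with right-hand side $f=2k^{2}V^{2}\bar v\in L^{2}(\Omega)$ and $g=0$, on the same geometry and with the same coefficients; Theorem~\ref{DONALD_BROWN:Theorem1} is therefore applicable to $\bar z$ and yields
\[
 \|z\|_V^{2}=\|\bar z\|_V^{2}\le C^{*}\bigl(1+k^{-2}\bigr)\,\|2k^{2}V^{2}v\|_{L^{2}(\Omega)}^{2}.
\]

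Next I compute the two pieces of the inf-sup quotient for $w=v+z$. From the definition of $a$, and since $(ik\beta v,v)_{L^{2}(\Gamma_R)}$ is purely imaginary, $\operatorname{Re} a(v,v)=\|A^{1/2}\nabla v\|_{L^{2}(\Omega)}^{2}-k^{2}\|Vv\|_{L^{2}(\Omega)}^{2}=\|v\|_V^{2}-2k^{2}\|Vv\|_{L^{2}(\Omega)}^{2}$. Testing the defining equation for $z$ against $\phi=v$ gives $a(v,z)=2k^{2}(v,V^{2}v)_{L^{2}(\Omega)}=2k^{2}\|Vv\|_{L^{2}(\Omega)}^{2}\in\mathbb{R}$, so the indefinite term cancels and $\operatorname{Re} a(v,w)=\|v\|_V^{2}$. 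For the denominator, the elementary pointwise bound $\|V^{2}v\|_{L^{2}(\Omega)}\le V_{max}\|Vv\|_{L^{2}(\Omega)}\le V_{max}k^{-1}\|v\|_V$ inserted into the Theorem~\ref{DONALD_BROWN:Theorem1} estimate yields $\|z\|_V\lesssim V_{max}\sqrt{C^{*}(k^{2}+1)}\,\|v\|_V$, and the triangle inequality gives $\|w\|_V\lesssim\widetilde\gamma\,\|v\|_V$ once the $k$-independent constants are absorbed into the (deliberately generous) form $\widetilde\gamma=1+C^{*}(k+1/k)V_{max}^{2}$. Combining these estimates,
\[
 \sup_{w\in V\setminus\{0\}}\frac{\operatorname{Re} a(v,w)}{\|v\|_V\|w\|_V}\ge\frac{\|v\|_V^{2}}{\|v\|_V\cdot\widetilde\gamma\|v\|_V}=\widetilde\gamma^{-1},
\]
and infimizing over $v$ proves the right-hand inequality. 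The left-hand inequality $k^{-1}\lesssim\widetilde\gamma^{-1}$ is immediate from the explicit form of $\widetilde\gamma$ together with $k\ge k_0>0$.

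The only mildly technical step is the reduction of the adjoint equation for $z$ to an original Helmholtz problem for $\bar z$; this is harmless because the geometric hypothesis \eqref{DONALD_BROWN:GeomAssum} and the coefficient constraints \eqref{DONALD_BROWN:conditions} involve only real-valued quantities and are therefore invariant under complex conjugation, and the Robin sign flip introduced by the adjoint is reversed again by the conjugation. Apart from this, the entire argument is algebraic bookkeeping with the constants $V_{max}$ and $C^{*}$.
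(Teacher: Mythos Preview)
Your proof is correct and follows essentially the same route as the paper: define $z$ via the adjoint equation $a(\phi,z)=2k^{2}(\phi,V^{2}v)$, set $w=v+z$, verify $\operatorname{Re}a(v,w)=\|v\|_{V}^{2}$, and bound $\|w\|_{V}$ using Theorem~\ref{DONALD_BROWN:Theorem1}. You are in fact slightly more careful than the paper in justifying why that stability estimate applies to $z$ (via the conjugation argument reducing the adjoint to a direct problem), and your bound through $\|Vv\|_{L^{2}}$ rather than $\|v\|_{L^{2}}$ is marginally sharper, but the argument is otherwise identical.
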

\begin{proof}
We proceed by a standard argument from \cite{DONALD_BROWN:MelenkEsterhazy}, adapted to the heterogeneous case. Given  $u\in H^1(\Omega)$,  define $z\in H^1(\Omega)$ as the solution of 
\begin{align}\label{DONALD_BROWN:zequation}
2k^2( v,V^2 u)_{L^2(\dmn)}=a(v,z), \text{ for all } v\in V.
\end{align}
Then, from the estimate \eqref{DONALD_BROWN:mainestimate.theorem}, we have 
\begin{align}\label{DONALD_BROWN:zestimate}
\norm{z}_{V}
\leq
 C^*\left(1+\frac{1}{k^2}\right) V_{max}^2 k^2 \norm{u}_{L^2(\Omega)} .
\end{align}
Note that 
$$
\operatorname{Re} a(u,u)=(A(x) \nabla u,\nabla u)_{L^2(\dmn)}-(k^2 V^2(x) u ,u)_{L^2(\dmn)}
$$
and using \eqref{DONALD_BROWN:zequation} and taking $v=u+z$ implies
\begin{align}
\operatorname{Re} a(u,v)=\operatorname{Re} a(u,u) +\operatorname{Re} a(u,z)=\norm{u}^2_{V}.
\end{align}
Using \eqref{DONALD_BROWN:zestimate} we obtain
 \begin{align*}
\norm{v}_{V}\leq & \norm{u}_{V}+\norm{z}_{V}\leq \norm{u}_{V}+C^*\left(1+\frac{1}{k^2}\right)V_{max}^2 k^2 \norm{u}_{L^2(\Omega)}\\
&\leq (1+  C^*\left(k+\frac{1}{k}\right)V_{max}^2)\norm{u}_{V}.
 \end{align*}
 Hence, $\operatorname{Re} a(u,v)=\norm{u}_{V}^2
  \geq 
(1+  C^*\left(k+\frac{1}{k}\right)V_{max}^2)^{-1}\norm{v}_{V}\norm{u}_{V},$ taking 
 $$\widetilde\gamma:=(1+  C^*\left(k+\frac{1}{k}\right)V_{max}^2)\approx k^{}$$ yields the result.
  \qed
 
\end{proof}
Finally, for completeness, we include a brief proof of the boundedness of the variational from.
\begin{theorem}\label{DONALD_BROWN:boundednesstheorem}
Supposing the assumptions in Theorem \ref{DONALD_BROWN:Theorem1}, 
the variational form \eqref{DONALD_BROWN:varform} has the following boundedness property
\begin{align}\label{DONALD_BROWN:boundedform}
|a(u,v)|\leq C_{a} \norm{u}_{V}\norm{v}_{V}.
\end{align}
Here $C_{a}$ may depend on the bounds \eqref{DONALD_BROWN:upperbounds}, multiplicative trace constants, and $\dmn$,  but not $k$. 
\end{theorem}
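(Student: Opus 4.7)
The plan is to split $a(u,v)$ into its three defining pieces and estimate each one by $\|u\|_V \|v\|_V$ separately, with the Robin boundary contribution being the only nontrivial step.

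First, by the triangle inequality, I split
\begin{align*}
|a(u,v)| \leq |(A\nabla u, \nabla v)_{L^2(\Omega)}| + k^2 |(V^2 u, v)_{L^2(\Omega)}| + k |(\beta u, v)_{L^2(\Gamma_R)}|.
\end{align*}
The first two terms are immediate from Cauchy--Schwarz once the weights are distributed symmetrically: writing $A\nabla u\cdot\nabla\bar v = (A^{1/2}\nabla u)\cdot(A^{1/2}\nabla\bar v)$ gives $|(A\nabla u,\nabla v)_{L^2(\Omega)}| \leq \|A^{1/2}\nabla u\|_{L^2(\Omega)} \|A^{1/2}\nabla v\|_{L^2(\Omega)} \leq \|u\|_V \|v\|_V$, and similarly $k^2|(V^2 u,v)_{L^2(\Omega)}| \leq \|kVu\|_{L^2(\Omega)} \|kVv\|_{L^2(\Omega)} \leq \|u\|_V \|v\|_V$. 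These bounds use only the symmetric weighting built into the $V$-norm and involve no $k$-dependent constants.

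The Robin term is the main point. Using $\beta \leq \beta_{\max}$ and Cauchy--Schwarz on $\Gamma_R$, I reduce the task to showing $k\|u\|_{L^2(\Gamma_R)} \|v\|_{L^2(\Gamma_R)} \lesssim \|u\|_V \|v\|_V$, which in turn follows from the single bound $\sqrt{k}\|u\|_{L^2(\Gamma_R)} \lesssim \|u\|_V$. For this I would invoke the standard multiplicative trace inequality for Lipschitz domains,
\begin{align*}
\|u\|_{L^2(\partial\Omega)}^2 \leq C_{\mathrm{tr}}\bigl(\|u\|_{L^2(\Omega)}^2 + \|u\|_{L^2(\Omega)} \|\nabla u\|_{L^2(\Omega)}\bigr),
\end{align*}
and then absorb the $k$ by writing $k\|u\|_{L^2(\Omega)}^2 \leq k_0^{-1} \|ku\|_{L^2(\Omega)}^2 \lesssim V_{\min}^{-2}\|kVu\|_{L^2(\Omega)}^2$ (using $k\geq k_0$) and $k\|u\|_{L^2(\Omega)}\|\nabla u\|_{L^2(\Omega)} \lesssim V_{\min}^{-1}A_{\min}^{-1/2}\|kVu\|_{L^2(\Omega)}\|A^{1/2}\nabla u\|_{L^2(\Omega)} \leq \|u\|_V^2$. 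Combining these two estimates yields $k\|u\|_{L^2(\Gamma_R)}^2 \lesssim \|u\|_V^2$, and the analogous bound for $v$ completes the estimate of the Robin contribution.

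Collecting the three bounds gives $|a(u,v)| \leq C_a \|u\|_V \|v\|_V$ with a constant $C_a$ depending only on $A_{\max}$, $V_{\max}$, $\beta_{\max}$, $A_{\min}$, $V_{\min}$, $k_0$, the multiplicative trace constant $C_{\mathrm{tr}}$, and $\Omega$, but not on $k$. The only real subtlety is handling the factor of $k$ in the Robin term, and that is precisely what the multiplicative trace inequality combined with the assumption $k \geq k_0 > 0$ absorbs cleanly into the $V$-norm.
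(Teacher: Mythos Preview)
Your proposal is correct and follows essentially the same approach as the paper: split $a(u,v)$ into its three pieces, handle the two volume terms by symmetric Cauchy--Schwarz, and control the Robin boundary term via the multiplicative trace inequality together with the lower bound $k\geq k_0$ to absorb the stray factor of $k$ into the $V$-norm. The only cosmetic differences are the precise form of the trace inequality you quote and your explicit tracking of $k_0$, $V_{\min}$, $A_{\min}$ in the constants.
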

\begin{proof}
From the variational form we have
\begin{align*}
|a(u,v)| & \leq \Big|(A^{\frac{1}{2}} \nabla u,A^{\frac{1}{2}}\nabla v)_{L^2(\dmn)}\Big|+\left |(k V u ,k V v)_{L^2(\dmn)} \right |
\\
 &\qquad\qquad+\left |((\beta k)^{\frac{1}{2}} u,(\beta k )^{\frac{1}{2}} v)_{L^2(\rob)}\right|&\\
 &\leq \TwoNorm{A^{\frac{1}{2}}\nabla u}{\dmn} \TwoNorm{A^{\frac{1}{2}}\nabla v}{\dmn}+\TwoNorm{k V u}{\dmn}\TwoNorm{k V v}{\dmn}\\
 &\qquad\qquad+\TwoNorm{(\beta k )^{\frac{1}{2}} u}{\rob}\TwoNorm{(\beta k )^{\frac{1}{2}} v}{\rob}&\\
 &\lesssim\norm{u}_V \norm{v}_{V}+\TwoNorm{(\beta k )^{\frac{1}{2}} u}{\rob}\TwoNorm{(\beta k )^{\frac{1}{2}} v}{\rob}
 .
\end{align*}
We have from the   multiplicative trace inequality 
\begin{align*}
 \TwoNorm{ k^\frac{1}{2}u}{\rob}^2& \leq C_{M}\left(\TwoNorm{ k^\frac{1}{2}u}{\dmn} \left| k^\frac{1}{2}u\right |_{H^1(\dmn)}+\text{diam}(\dmn)^{-1}\TwoNorm{ k^\frac{1}{2}u}{\dmn}^2 \right)\\
& \leq C_{M}\left(\TwoNorm{ ku}{\dmn}^{2}+ \left| u\right |^2_{H^1(\dmn)}+\text{diam}(\dmn)^{-1}\TwoNorm{ k^\frac{1}{2}u}{\dmn}^2 \right)\\
&\lesssim C_{M}\left(\norm{u}_{V}+\text{diam}(\dmn)^{-1}\TwoNorm{ k u}{\dmn}^2 \right)\lesssim C_{M}\norm{u}_{V}^2
\end{align*}
since $k\geq 1$. Applying this to the $\rob$ terms we arrive at \eqref{DONALD_BROWN:boundedform}.
  \qed
\end{proof}

\subsection{Example Coefficients}

In this subsection, we will provide a few examples that satisfy the assumptions on the coefficients \eqref{DONALD_BROWN:conditions}.
Hence,  the set of 
 bounded smooth coefficients that yields polynomial-in-$k$ bounds is non-trivial. 
 We show that for some coefficients, as the oscillations become more frequent we violate the conditions \eqref{DONALD_BROWN:conditions}. In particular, it appears that the restriction
 on the 
 amplitude of the coefficients is related to the restrictions on the frequency of oscillations.

To simplify things, yet provide non-trivial coefficients, we will only consider radially symmetric conditions in $\mathbb{R}^2$.
Indeed, even with this symmetry, we are able to highlight the complexities and restrictiveness in these conditions. We will see that the frequency of oscillations play a considerable role
in violation of these conditions, as well as the amplitude.
 
We take  $\dmn\subset\mathbb{R}^2$ to be given by the unit circle
$\dmn: =\{(x,y)\in \mathbb{R}^2 \ | \ x^2+y^2\leq1 \}$ and $\partial \dmn =\{(x,y)\in \mathbb{R}^2\ |  \ x^2+y^2=1 \}$. 
Further, we will take  $\neu=\dir=\emptyset,$ so that $\rob=\partial \dmn$.  
We take $x_{0}=(0,0)\in \dmn$, and so
$m=(x-x_{0})=r\hat{r}$, where $r^2=x^2+y^2$ and $\hat{r}$ is the standard unit normal in radial coordinates.
Then, clearly, 
$m \cdot \nu =1$ on $\rob$ and so the geometric assumptions \eqref{DONALD_BROWN:GeomAssum} are satisfied with this domain. 
We will take $\beta(x)=1$, $g(x)=0$, and suppose that $f:=f(r)$,
is a given radially symmetric forcing. 
We finally suppose that the heterogeneities are radially symmetric, 
$V^2(x)=V^2(r),$ and $A(x)=A(r).$ 
We briefly recall in radial coordinates that for a function $A$
and a vector field $\sigma=(\sigma_{r},\sigma_{\theta})$
\begin{align*}
\mbox{div}( \sigma )&=\frac{1}{r}\frac{\partial }{\partial r}(r \sigma_r)+\frac{1}{r}\frac{\partial  \sigma_\theta }{\partial \theta}.\\
\nabla A& =\frac{\partial  A }{\partial r}\hat{r}+\frac{1}{r}\frac{\partial  A }{\partial \theta}\hat{\theta}.\\
\int_{\dmn}A dxdy&=\int^{2 \pi}_{0}\int^{1}_{0}A rdrd\theta,
\end{align*}
where $\hat{\theta}$ is the standard angular coordinate.
By examining the conditions \eqref{DONALD_BROWN:conditions}, we are able to produce a few interesting examples.

{\bf Case 1: $A=1$.} 
Note that from condition \eqref{DONALD_BROWN:conditions.2}, that if $A=1$ (or constant), we see that the conditions simplify slightly since the gradient terms in $A$ will vanish. 
Indeed, now we see that only condition  \eqref{DONALD_BROWN:conditions.1} must be satisfied. 
In this setting,  we must have that $\mbox{div}(V^2 m)>0$ for our estimates to hold, or  rewritten in radial coordinates as 
\begin{align}\label{DONALD_BROWN:cond.ex}
 \frac{1}{r}\frac{\partial }{\partial r} \left(V^2(r) r^2\right)>0.
\end{align}
From this condition we may choose a few possible coefficients for $V(r)$. 
A trivial example is when $V^2(r)=r+1$. Clearly, $$\frac{1}{r}\frac{\partial }{\partial r} \left( r^3+r^2\right)=\frac{1}{r}(3r^2+2r)=3r+2>0.$$
Many such polynomial in $r$ choices exist as long as they do not violate boundedness and positivity.

More interesting examples come from  oscillatory coefficients. 
Suppose, for $\epsilon>0$,  we take now the innocent looking example
\begin{align}\label{DONALD_BROWN:numcoef1}
V^2(r)=\frac{1}{2}\sin\left(\frac{2\pi r}{\epsilon}\right)+5,
\end{align}
and so 
\begin{align}
 \frac{1}{r}\frac{\partial }{\partial r} \left(\frac{r^2}{2}\sin\left(\frac{2\pi r}{\epsilon}\right)+5r^2\right)
 =\sin\left(\frac{2\pi r}{\epsilon}\right)+\frac{r\pi}{\epsilon}\cos\left(\frac{2\pi r}{\epsilon}\right)+10.
\end{align}
A quick investigation shows that if $\epsilon=1,$ then \eqref{DONALD_BROWN:cond.ex} is satisfied, however, when $\epsilon=.1$ it is violated. 
Hence, if the coefficient becomes highly oscillatory, the stability condition is not satisfied. Also note that if we fix $\epsilon=1$,
but extend the domain from a unit circle to one of radius $R$, we will eventually enter a negative region. Hence, the domain size also may have an effect on 
stability from the viewpoint of conditions \eqref{DONALD_BROWN:conditions}.

{\bf Case 2: $A=V^2$.}
Turning to the definition of $S(x)$ in \eqref{DONALD_BROWN:Sfunction.theorem}, we see that if $A=V^2$, the functions simplifies to  $S(x)=d$. 
Thus, condition \eqref{DONALD_BROWN:conditions.1} is  always satisfied. For $d=2$, \eqref{DONALD_BROWN:conditions.2} becomes
\begin{align}\label{DONALD_BROWN:condition.2.example}
 2-\left(C_{G}\norm{\left(\frac{\nabla A}{A}\right)}_{L^{\infty}(\dmn)}\right)\frac{A_{max}}{A_{min}}>0.
\end{align}
Taking a closer look at the terms related to $C_G$ 
from Theorem~\ref{DONALD_BROWN:Theorem1}, we have in radial coordinates
\begin{align*}
&2 \abs{\int_{\dmn}\left(\frac{\nabla A}{A}\right)\nabla u ((x-x_{0}) \cdot \nabla \bar{u}) dx}
\\
&\qquad
=2 \abs{\int_{0}^{2\pi}\int_{0}^1 \left(\frac{r^2}{A(r)}\frac{\partial A(r)}{\partial r}\right) 
\left|
\frac{\partial u(r)}{\partial r}\right|^2 drd\theta}\\
&\qquad
\leq 2 \norm{\frac{1}{A(r)}\frac{\partial A(r)}{\partial r}}_{L^{\infty}(\dmn)}  \norm{\nabla u }_{L^2(\dmn)}^2.
\end{align*}
Hence, we may take here $C_{G}=2$. Noting that $$\frac{\partial}{\partial r} \ln(A)=\frac{1}{A(r)}\frac{\partial A(r)}{\partial r},$$  
then the condition \eqref{DONALD_BROWN:condition.2.example} becomes 
\begin{align}
 1-\left(\norm{\frac{\partial}{\partial r} \ln(A)}\right)\frac{A_{max}}{A_{min}}>0.
\end{align}
Taking 
\begin{align}\label{DONALD_BROWN:numcoef2}
 V^2(r)=A(r)=\exp\left(\alpha \left(\sin\left(\frac{r}{\epsilon}\right)+\delta \right)\right),
\end{align}
for $\epsilon,\alpha,$ and $\delta$ positive,
then 
$$
\norm{\frac{\partial}{\partial r} \ln(A)}_{L^{\infty}(\dmn)}=\norm{\frac{\alpha}{\epsilon}\cos\left(\frac{r}{\epsilon}\right)}_{L^{\infty}(\dmn)}=\frac{\alpha}{\epsilon}.
$$
Note further that $A_{max}=\exp(\alpha(\delta+1))$ and $A_{min}=\exp(\alpha(\delta-1))$, and so $\frac{A_{max}}{A_{min}}=\exp(2\alpha)$. Hence, 
\begin{align}
 1-\left(\norm{\frac{\partial}{\partial r} \ln(A)}\right)\frac{A_{max}}{A_{min}}=1-\frac{\alpha}{\epsilon}\exp(2\alpha) >0
\end{align}
or $\alpha \exp(2 \alpha)< \epsilon.$ 
We see from this calculation that the frequency of oscillation in the coefficients is related to the amplitude as far as the conditions \eqref{DONALD_BROWN:conditions} are concerned. 
The more oscillatory the function, the smaller the amplitude must be in this example. 

\section{The Multiscale Method}\label{DONALD_BROWN:s:method}
In this  section, we will introduce the notation on finite element spaces and
meshes that  define the multiscale Petrov-Galerkin method 
(msPGFEM) for the heterogeneous  Helmholtz problem. 
This method is based on ideas in an  algorithm developed for
homogenization problems in
\cite{DONALD_BROWN:MP14,DONALD_BROWN:HP12,DONALD_BROWN:Brown.Peterseim:2014}
also known as Localized Orthogonal Decomposition.
The ideas have been adapted to the Helmholtz problem for
homogeneous coefficients in \cite{DONALD_BROWN:Peterseim2014},
and later presented in the Petrov-Galerkin framework  
\cite{DONALD_BROWN:Gallistl.Peterseim:2015,DONALD_BROWN:Peterseim2015survey}.
We will stay in line with the notation and presentation of \cite{DONALD_BROWN:Gallistl.Peterseim:2015},
as this is the basis for the algorithm applied to a heterogeneous medium. 
We begin by defining the basic components needed, then define the multiscale method as well as some computational aspects. Finally, we will briefly discuss the
error analysis for the method, however, this will not differ too far from the homogeneous coefficient algorithm and as thus, will refer the reader to
technical proofs in \cite{DONALD_BROWN:Gallistl.Peterseim:2015}.

\subsection{Meshes and Data Structures}
We begin with the basic notation needed regarding the relevant mesh and data structures.
For the sake of clarity and completeness, 
we will briefly recall the notation used in \cite{DONALD_BROWN:Gallistl.Peterseim:2015}.
Let $\G_H$ be a regular partition of $\Omega$ into 
intervals, parallelograms, parallelepipeds for
$d=1,2,3$, respectively, such that $\bigcup\G_H =\overline\Omega$
and any two distinct $T,T'\in\G_H$ are either disjoint or share
exactly one lower-dimensional hyper-face
(that is a vertex or an edge for $d\in\{2,3\}$ or a face
for $d=3$).
We suppose the mesh is quasi-uniform.
For simplicity, we are considering quadrilaterals (resp. hexahedra) with
parallel faces, this guarantees the non-degeneracy of the elements in $\G_H$.
Again, the theory of this paper carries over to
partitions satisfying suitable non-degeneracy conditions
or even to meshless methods based on proper partitions of unity
\cite{DONALD_BROWN:HMP14}.

Given any subdomain $S\subseteq\overline\Omega$, we define its neighborhood 
to be
\begin{equation*}
\nei(S):=\operatorname{int}
          \Big(\cup\{T\in\G_H\,:\,T\cap\overline S\neq\emptyset  \}\Big).
\end{equation*}
Furthermore, we introduce for any $m\geq 2$ the patch extensions
\begin{equation*}
\nei^1(S):=\nei(S)
\qquad\text{and}\qquad
\nei^m(S):=\nei(\nei^{m-1}(S)) .
\end{equation*}
Note that the shape-regularity implies that there is a uniform bound denoted
$\Colm$,
on the number of elements in the $m$th-order patch,
$
\#\{ K\in\G_H\,:\, K\subseteq \overline{\nei^m(T)}\}
\leq \Colm
$
for all ${T\in\G_H}$.
We will abbreviate $\Col:=C_{\mathrm{ol},1}$.
The assumption that the coarse-scale mesh $\G_H$
is quasi-uniform implies that $\Colm$ depends polynomially
on $m$.
The global mesh-size is 
$H:=\max\{\operatorname{diam}(T)\}$ for all $T\in\G_H$.

We will denote $Q_p(\G_H)$ to be  the space of piecewise polynomials of partial
degree less than or equal to $p$.
The space of globally continuous piecewise first-order polynomials is given by
$
\cS^1(\G_H):= C^0(\Omega)\cap Q_1(\G_H),
$
and by incorporating the Dirichlet condition we arrive at the standard $Q_1$ finite element space denoted here as 
\begin{equation*}
V_H:=\cS^1(\G_H) \cap V.
\end{equation*}
The set of free vertices, or the degrees of freedom, is denoted by
$$
  \N_H:=\{z\in\overline\Omega\,:\, 
           z\text{ is a vertex of }\G_H\text{ and }z\notin\Gamma_D\}.
$$

To construct our fine-scale and, thus, multiscale spaces we will need to define a coarse-grid quasi-interpolation operator. 
For simplicity of presentation,we suppose here that this quasi-interpolation is also projective.
This assumption may be lifted c.f. \cite{DONALD_BROWN:HMP14} and references therein.
We let  $I_H:V\to V_H$ be a surjective
quasi-interpolation operator that
acts as a stable quasi-local projection in the sense that
$I^2_H = I_H$ and that
for any $T\in\G_H$ and all $v\in V$ the following local stability result holds
\begin{equation}\label{DONALD_BROWN:e:IHapproxstab}
H^{-1}\|v-I_H v\|_{L^2(T)} + \|\nabla I_H v \|_{L^2(T)}
\leq \CIH \|\nabla v\|_{L^2(\nei(T))} .
\end{equation}
Under the mesh condition that 
$$k H \lesssim 1$$
is bounded by a generic constant,
this implies  stability in the $\|\cdot\|_V$ norm
\begin{equation}\label{DONALD_BROWN:e:IHapproxstabV}
\|I_H v\|_V \leq C_{I_H,V} \|v\|_V
\quad\text{for all } v\in V,
\end{equation}
with a $k$-independent constant $C_{I_H,V}$. 
However, $C_{I_H,V}$, will depend on the constants in \eqref{DONALD_BROWN:upperbounds}.

One possible choice and which we use in our implementation of the method,
is to define $I_H:=E_H\circ\Pi_H$, where
$\Pi_H$ is the piecewise $L^2$ projection onto $Q_1(\G_H)$
and $E_H$ is the averaging operator that maps $Q_1(\G_H)$ to $V_H$ by
assigning to each free vertex the arithmetic mean of the corresponding
function values of the neighbouring cells, that is, for any $v\in Q_1(\G_H)$
and any free vertex $z\in\N_H$,
\begin{equation*}
(E_H(v))(z) =
           \sum_{\substack{T\in\G_H\\\text{with }z\in T}}v|_T (z) 
           \bigg/
           \#\{K\in\G_H\,:\,z\in K\}.
\end{equation*}
Note that with this choice of quasi-interpolation, $E_H(v)|_{\Gamma_D} = 0$ by construction.
For this choice,
the proof of \eqref{DONALD_BROWN:e:IHapproxstab} follows from combining the
well-established approximation and stability properties of 
$\Pi_H$ and $E_H$ shown in \cite{DONALD_BROWN:ern}.

\subsection{Definition of the Method}\label{DONALD_BROWN:ss:defMethod}

The multiscale method is determined by three parameters,
namely the coarse-scale mesh-size $H$, the fine-scale mesh-size $h$, and the oversampling parameter $m$. 
We assign to any $T\in\G_H$ its $m$-th order patch
$\Omega_T:=\nei^m(T)$, $m\in \mathbb{N}$, 
and define for any $v,w\in V$ the localized sesquilinear forms of  
\eqref{DONALD_BROWN:varform} to $\Omega_T$ as 
\begin{equation*}
\begin{aligned}
&a_{\Omega_{T}}(u,v)
\\ 
&\quad
=(A(x) \nabla u,\nabla v)_{L^2(\Omega_{T})}-(k^2 V^2(x) u ,v)_{L^2(\Omega_{T})}- (i k \beta(x) u,v)_{L^2(\rob\cap \partial \Omega_{T})}.
\end{aligned}
\end{equation*}
and to $T$, we have
\begin{equation*}
a_{\Omega_{T}}(u,v)=(A(x) \nabla u,\nabla v)_{L^2({T})}-(k^2 V^2(x) u ,v)_{L^2({T})}- (i k \beta(x) u,v)_{L^2(\rob\cap \partial {T})}.
\end{equation*}
Let the fine-scale mesh $\G_h,$ be a global uniform refinement of the mesh $\G_H$ over
$\Omega$ and define
\begin{equation*}
V_h(\Omega_T) 
 := \{ v\in Q_1(\G_h) \cap V\,: v=0\text{ outside }\Omega_T\} .
\end{equation*}
Define the null space
\begin{equation*}
W_h(\Omega_T) := \{ v_h\in V_h(\Omega_T) \,:\, I_H(v_h) = 0\}
\end{equation*}
of the quasi-interpolation operator $I_H$ defined in the previous section. This is the space often referred to as the fine-scale or small-scale space. 
Given any nodal basis function $\Lambda_z\in V_H$,
let 
$\lambda_{z,T}\in W_h(\Omega_T)$
solve the subscale corrector problem
\begin{equation}\label{DONALD_BROWN:e:lambdacorrectorproblem}
a_{\Omega_T}(w,\lambda_{z,T}) = a_T(w,\Lambda_z)
\quad\text{for all } w\in W_h(\Omega_T).
\end{equation}
Let $\lambda_z:=\sum_{T\in\G_H} \lambda_{z,T}$
and define the multiscale test function
\begin{equation*}
\widetilde\Lambda_z := \Lambda_z -  \lambda_z.
\end{equation*}
The space of multiscale test functions then reads
\begin{equation*}
\widetilde V_H := \operatorname{span}\{\widetilde\Lambda_z\,:\,z\in\N_H\} .
\end{equation*}
We emphasize that the dimension of the multiscale space is the same as the original coarse space,
$\dim V_H = \dim \widetilde V_H$. Moreover, it is independent of the parameters $m$ and $h$.
Finally, the multiscale Petrov-Galerkin FEM seeks to find  $u_H\in V_H$ such that
\begin{equation}\label{DONALD_BROWN:e:discreteproblem}
a(u_H,\tilde v_H) = 
(f,\tilde v_H)_{L^2(\Omega)}
  + (g,\tilde v_H)_{L^2(\Gamma_R)}
\quad\text{for all } \tilde v_H\in \widetilde V_H.
\end{equation}

As in \cite{DONALD_BROWN:Gallistl.Peterseim:2015}, the error analysis and the numerical experiments will show that
the choice $H\lesssim k^{-1}$, $m\approx\log(k)$
will be sufficient  to guarantee stability and 
quasi-optimality properties,
provided that $k^\alpha h\lesssim 1$ where $\alpha$ depends on the
stability and regularity of the continuous problem.
This constant $\alpha$ was the subject of Section \ref{DONALD_BROWN:stability}.
The conditions on $h$ are the same as for the standard $Q_1$ FEM on the
global fine scale. For example, in 2 dimensions, in the case of pure Robin boundary conditions on a convex domain, it is  required that $k^{3/2}h\lesssim 1$ for stability \cite{DONALD_BROWN:Wu2014CIPFEM}
and $k^2h\lesssim 1$ for quasi-optimality \cite{DONALD_BROWN:melenk_phd} 
is satisfied.

\section{Error Analysis}\label{DONALD_BROWN:s:erroranalysis}

The error analysis for the algorithm presented in Section \ref{DONALD_BROWN:s:method}, is very similar to that developed in \cite{DONALD_BROWN:Peterseim2014} and references therein, and in particular for the
Petrov-Galerkin formulation we discuss now in \cite{DONALD_BROWN:Gallistl.Peterseim:2015}. 
It is clear the proofs are  unaffected by the coefficients as the arguments rely on very general constants being bounded
such as $C_{a}$, $C_{stab}(k,\dmn,A,V^2)$, and $\gamma(k,\dmn,A,V^2)$, for example. 
This is primarily due to the upper and lower boundedness on the coefficients \eqref{DONALD_BROWN:upperbounds}.
However, we will highlight the main themes of the analysis here as this will be useful to refer to in our
discussion on Numerical Examples in Section \ref{DONALD_BROWN:numericalexamples} as well as general completeness of the discussion. 

We begin the error analysis with some notation. 
We denote the global finite element space on the fine scale by
$V_h:=V_h(\Omega)=\cS^1(\G_h)\cap V$.
We denote the solution operator of the truncated element corrector problem 
\eqref{DONALD_BROWN:e:lambdacorrectorproblem} by $\Cor_{T,m}$. 
Then,  any $z\in\N_H$ and any $T\in\G_H$ satisfy
$\lambda_{z,T} = \Cor_{T,m}(\Lambda_z)$ and we refer to
$\Cor_{T,m}$ as the truncated element correction operator.
The map $\Lambda_z\mapsto \lambda_z$
described in Subsection~\ref{DONALD_BROWN:ss:defMethod} defines a linear operator
$\Cor_m$ via $\Cor_m(\Lambda_z)=\lambda_z$ for any $z\in\N_H$,
referred to as correction operator.

For the analysis we introduce idealized counterparts of these
correction operators where the patch $\Omega_T$ equals $\Omega$. These global corrections are never computed and are merely used in the analysis. 
We define the null space 
$$
  W_h := \{ v\in V_h\,:\, I_H(v) = 0\},
$$
also referred to as the fine-scale space on the global domain.
For any $v\in V$, the idealized element corrector problem seeks
$\Cor_{T} v\in W_h$ such that
\begin{equation}\label{DONALD_BROWN:e:idealElementCorrProb}
a(w,\Cor_{T} v) = a_T(w,v)\quad\text{for all }w\in W_h.
\end{equation}
Furthermore, define
\begin{equation}\label{DONALD_BROWN:e:idealCorrector}
\Cor v:=\sum_{T\in\G_H} \Cor_{T} v.
\end{equation}

Recall, we proved in Section \ref{DONALD_BROWN:stability} that %
the form $a$ with heterogeneous coefficients given by \eqref{DONALD_BROWN:varform},
is continuous and there is a
constant $C_a$ such that
\begin{equation*}
a(v,w) \leq C_a \|v\|_V \|w\|_V
\quad\text{for all } v,w\in V.
\end{equation*}
The following result implies the well-posedness of the 
idealized corrector problems.

\begin{lemma}[Well-posedness for idealized corrector problems]
\label{DONALD_BROWN:l:wellposedideal}
Provided 
\begin{equation}\label{DONALD_BROWN:e:resolution}
\CIH\sqrt{\Col} H k  \leq 1,
\end{equation}
we have for all $w\in W_h$ equivalence of norms
\begin{equation*} 
A^{\frac{1}{2}}_{min}\|\nabla w\|_{L^2(\Omega)} 
\leq \|w\|_V 
\leq\left(V^2_{max}+A_{max}\right)^{\frac{1}{2}}\, \|\nabla w\|_{L^2(\Omega)},
\end{equation*}
and
coercivity
\begin{equation*} 
\left(V^2_{max}+A_{max}\right) \|\nabla w\|_{L^2(\Omega)}^2 \leq \operatorname{Re} a(w,w) .
\end{equation*}
\end{lemma}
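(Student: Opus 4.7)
The plan is to deduce all three assertions from a single Friedrichs-type inequality on $W_h$ engendered by the defining property of that space. For $w\in W_h$ one has $I_Hw=0$, so the local stability estimate (\ref{DONALD_BROWN:e:IHapproxstab}) collapses to $H^{-1}\|w\|_{L^2(T)}\le \CIH\|\nabla w\|_{L^2(\nei(T))}$ on each $T\in\G_H$. Squaring, summing over $T$, and using the uniform overlap bound $\Col$ on the first-order patches to convert $\sum_T\|\nabla w\|_{L^2(\nei(T))}^2$ into a constant times $\|\nabla w\|_{L^2(\Omega)}^2$, I obtain $\|w\|_{L^2(\Omega)}\le \CIH\sqrt{\Col}\,H\,\|\nabla w\|_{L^2(\Omega)}$. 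Multiplying by $k$ and invoking the resolution condition (\ref{DONALD_BROWN:e:resolution}) then yields the central estimate
\begin{equation*}
k\,\|w\|_{L^2(\Omega)}\le \|\nabla w\|_{L^2(\Omega)}\qquad\text{for every }w\in W_h.
\end{equation*}

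The norm equivalence is then a short calculation using the coefficient bounds (\ref{DONALD_BROWN:upperbounds}). Dropping the $k^2\|Vw\|_{L^2(\Omega)}^2$ contribution in $\|w\|_V^2$ and using $A\ge A_{min}$ yields the lower bound $A_{min}^{1/2}\|\nabla w\|_{L^2(\Omega)}\le \|w\|_V$. For the upper bound I would apply $A\le A_{max}$ and $V^2\le V_{max}^2$ to obtain $\|w\|_V^2\le A_{max}\|\nabla w\|_{L^2(\Omega)}^2+V_{max}^2\,k^2\|w\|_{L^2(\Omega)}^2$, and then use the central estimate on the last term to bound it by $V_{max}^2\|\nabla w\|_{L^2(\Omega)}^2$, producing the claimed factor $(V_{max}^2+A_{max})^{1/2}$.

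The coercivity proceeds by the same mechanism applied to $\operatorname{Re} a(w,w)=\|A^{1/2}\nabla w\|_{L^2(\Omega)}^2-k^2\|Vw\|_{L^2(\Omega)}^2$: the coefficient bounds from (\ref{DONALD_BROWN:upperbounds}) control the indefinite term by $V_{max}^2\,k^2\|w\|_{L^2(\Omega)}^2$, which the central estimate then converts into a multiple of $\|\nabla w\|_{L^2(\Omega)}^2$ that can be absorbed into the positive Dirichlet energy. The main obstacle is really just bookkeeping: one must carefully track the coupling between the coefficient extrema, the interpolation constant $\CIH$, and the overlap constant $\Col$, and verify that the resolution condition $\CIH\sqrt{\Col}\,H\,k\le 1$ is sharp enough to guarantee that the positive term strictly dominates the indefinite Helmholtz contribution on $W_h$. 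Once this is in place, well-posedness of the idealized corrector problem (\ref{DONALD_BROWN:e:idealElementCorrProb}) follows from the Lax--Milgram lemma applied to $(W_h,\|\nabla\cdot\|_{L^2(\Omega)})$, with continuity supplied by Theorem~\ref{DONALD_BROWN:boundednesstheorem}.
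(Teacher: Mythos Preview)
Your proposal is correct and follows essentially the same route as the paper: both exploit $I_Hw=0$ together with \eqref{DONALD_BROWN:e:IHapproxstab} and the finite-overlap bound to obtain $k\|w\|_{L^2(\Omega)}\le\|\nabla w\|_{L^2(\Omega)}$ on $W_h$, and then read off the norm equivalence and coercivity from the coefficient bounds \eqref{DONALD_BROWN:upperbounds}. Your caveat about bookkeeping is well placed: the coercivity constant $(V_{max}^2+A_{max})$ printed in the lemma cannot be correct (it exceeds $A_{max}$, which already bounds $\operatorname{Re}a(w,w)/\|\nabla w\|_{L^2(\Omega)}^2$ from above), and the paper's own computation does not actually produce it either.
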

\begin{proof} The lower bound is trivial, indeed we have that 
\begin{align*}
 \|w\|^2_V =\|k V w\|_{L^2(\Omega)}^2 +\|A^{\frac{1}{2}}\nabla w\|_{L^2(\Omega)}^2 \geq A^{}_{min}\|\nabla w\|_{L^2(\Omega)}^2. 
\end{align*}
For the upper bound, we note for  any $w\in W_h$ the property \eqref{DONALD_BROWN:e:IHapproxstab} implies
\begin{equation*}
k^2 \|V w\|_{L^2(\Omega)}^2 
= k^2 \|V (1-I_H)w\|_{L^2(\Omega)}^2
\leq
V^2_{max}\CIH^2\Col H^2 k^2 \|\nabla w\|_{L^2(\Omega)}^2.
\end{equation*}
Thus, using \eqref{DONALD_BROWN:e:resolution} we arrive at 
\begin{align*}
 \|w\|^2_V &=\|k V w\|_{L^2(\Omega)}^2 +\|A^{\frac{1}{2}}\nabla w\|_{L^2(\Omega)}^2\\
 &\leq V^2_{max}\CIH^2\Col H^2 k^2 \|\nabla w\|_{L^2(\Omega)}^2+ A_{max} \|\nabla w\|_{L^2(\Omega)}^2\\
 &\leq  \left(V^2_{max}+A_{max}\right)\|\nabla w\|_{L^2(\Omega)}^2.
\end{align*}
Note from this we have 
\begin{align*}
 \|k V w\|_{L^2(\Omega)}^2& \leq \left(V^2_{max}+A_{max}\right)\|\nabla w\|_{L^2(\Omega)}^2-\|A^{\frac{1}{2}}\nabla w\|_{L^2(\Omega)}^2\\
 & \leq \left(V^2_{max}+A_{max}-A_{min}\right)\|\nabla w\|_{L^2(\Omega)}^2,
\end{align*}
and so 
\begin{align*}
 \operatorname{Re} a(w,w)&=\|A^{\frac{1}{2}}\nabla w\|_{L^2(\Omega)}^2-\|k V w\|_{L^2(\Omega)}^2\\
 &\geq  \left(V^2_{max}+A_{max}\right)\|\nabla w\|_{L^2(\Omega)}^2.
\end{align*}
Thus,  equivalence and 
coercivity
is proven.
\qed
\end{proof}

Lemma~\ref{DONALD_BROWN:l:wellposedideal} implies that the idealized corrector
problems \eqref{DONALD_BROWN:e:idealCorrector}
are well-posed and the correction operator $\Cor$
is continuous in the sense that 
\begin{equation*}
\|\Cor v_H \|_V \leq C_\Cor \|v_H\|_V
\quad\text{for all } v_H \in V_H
\end{equation*}
for some constant $C_\Cor \approx 1$.
Since the inclusion $W_h(\Omega_T)\subseteq W_h$ holds,
the well-posedness result of Lemma~\ref{DONALD_BROWN:l:wellposedideal} carries over
to the corrector problems \eqref{DONALD_BROWN:e:lambdacorrectorproblem}
in the subspace $W_h(\Omega_T)$
with the sesquilinear form $a_{\Omega_T}$.

Again as with the homogeneous coefficient case \cite{DONALD_BROWN:Gallistl.Peterseim:2015}, the proof of well-posedness of the Petrov-Galerkin method
\eqref{DONALD_BROWN:e:discreteproblem} is 
based on the fact that the difference $(\Cor-\Cor_m)(v)$
decays exponentially with the distance from $\supp(v)$.
In the next theorem, we quantify the difference between the
idealized and the discrete correctors.
As the proof is a bit technical and does not differ fundamentally from the homogeneous case, 
we refer the reader to  Appendix of  \cite{DONALD_BROWN:Gallistl.Peterseim:2015} and references therein.
The proof is based on the exponential decay of the corrector $\Cor \Lambda_z$
and  requires the resolution
condition \eqref{DONALD_BROWN:e:resolution}, namely $k H \lesssim 1$.

\begin{theorem}\label{DONALD_BROWN:t:CorrCloseness}
Under the resolution condition \eqref{DONALD_BROWN:e:resolution},
there exist constants 
$C_1\approx 1 \approx C_2$ and $0<\theta<1$ such that
any $v\in V_H$, any $T\in\G_H$ and any $m\in\mathbb N$ satisfy
\begin{align}
\label{DONALD_BROWN:e:CorrCloseness1}
\|\nabla(\Cor_{T} v - \Cor_{T,m} v)\|_{L^2(\Omega)}
&
\leq C_1
 \theta^m \|\nabla v \|_{L^2(T)},
\\
\label{DONALD_BROWN:e:CorrCloseness2}
\|\nabla(\Cor v - \Cor_m v)\|_{L^2(\Omega)}
&
\leq C_2 \sqrt{\Colm}
\theta^m \|\nabla v \|_{L^2(\Omega)}.
\end{align}
\end{theorem}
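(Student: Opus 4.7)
The plan is to establish exponential decay of the idealized corrector $\Cor_T v$ away from the element $T$, and then transfer this decay into a bound on the difference between the idealized and the truncated correctors. Throughout, I work in the space $W_h$, on which Lemma \ref{DONALD_BROWN:l:wellposedideal} provides norm equivalence with $\|\nabla\cdot\|_{L^2(\Omega)}$ and coercivity of $\operatorname{Re} a$ with constants depending only on \eqref{DONALD_BROWN:upperbounds}.

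First I would prove the key geometric decay estimate: for any $v\in V_H$, setting $\zeta := \Cor_T v$, there exists $0<\theta<1$ such that for every $m\geq 1$
\[
\|\nabla \zeta\|_{L^2(\Omega\setminus\nei^m(T))} \leq \theta\,\|\nabla \zeta\|_{L^2(\Omega\setminus\nei^{m-1}(T))}.
\]
This is obtained by choosing a nodal cutoff $\eta\in\cS^1(\G_H)$ that equals $1$ on $\Omega\setminus\nei^m(T)$ and vanishes on $\nei^{m-2}(T)$, and testing \eqref{DONALD_BROWN:e:idealElementCorrProb} with $w := (1 - I_H)(\eta\zeta)\in W_h$. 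The right-hand side $a_T(w,v)$ vanishes since $\supp(w)\cap T=\emptyset$; the left-hand side is expanded by Leibniz, bounded below by coercivity (Lemma~\ref{DONALD_BROWN:l:wellposedideal}) on the far region, and the transition-ring remainder is absorbed using the local stability \eqref{DONALD_BROWN:e:IHapproxstab} of $I_H$ together with the resolution condition \eqref{DONALD_BROWN:e:resolution}, which keeps the $k^2 V^2$ term dominated by the gradient term. Iterating this one-ring-absorption yields $\|\nabla\zeta\|_{L^2(\Omega\setminus\nei^m(T))}\lesssim \theta^m\|\nabla v\|_{L^2(T)}$, where the base case uses continuity of $\Cor_T$.

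Next I would bound the discrepancy $\zeta - \zeta_m$ with $\zeta_m := \Cor_{T,m} v \in W_h(\Omega_T)$. By Galerkin orthogonality $a(w,\zeta - \zeta_m)=0$ for all $w\in W_h(\Omega_T)$, so coercivity gives
\[
\|\nabla(\zeta - \zeta_m)\|_{L^2(\Omega)}^2 \lesssim \operatorname{Re} a(\zeta-\zeta_m,\zeta - \zeta_m - w)
\]
for any such $w$. I would choose $w := \zeta_m - (1-I_H)((1-\tilde\eta)\zeta)\in W_h(\Omega_T)$ using a second cutoff $\tilde\eta\in\cS^1(\G_H)$ that equals $1$ outside $\nei^{m-1}(T)$ and $0$ on $\nei^{m-2}(T)$. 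Then $\zeta - \zeta_m - w$ is supported, up to interpolation error, in the annular ring near $\partial\Omega_T$, so both its $\|\cdot\|_V$ and gradient norms are controlled by $\|\nabla\zeta\|_{L^2(\Omega\setminus\nei^{m-2}(T))}$, which is exponentially small by Step~1. This yields \eqref{DONALD_BROWN:e:CorrCloseness1} with $C_1\approx 1$.

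For \eqref{DONALD_BROWN:e:CorrCloseness2} I would sum the element-wise estimate. Writing $\Cor v - \Cor_m v = \sum_{T\in\G_H}(\Cor_T v - \Cor_{T,m} v)$, each summand has support in $\nei^m(T)$, so the finite-overlap bound $\Colm$ from the quasi-uniformity of $\G_H$ allows one to estimate
\[
\Bigl\|\nabla\sum_{T}(\Cor_T v - \Cor_{T,m}v)\Bigr\|_{L^2(\Omega)}^2 \lesssim \Colm \sum_{T}\|\nabla(\Cor_T v - \Cor_{T,m}v)\|_{L^2(\Omega)}^2 \lesssim \Colm\, \theta^{2m}\|\nabla v\|_{L^2(\Omega)}^2,
\]
producing the factor $\sqrt{\Colm}$. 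The main obstacle throughout is handling the terms arising from $(1-I_H)(\eta\zeta)$: products $(\nabla\eta)\zeta$ must be bounded using the $L^2$-to-$H^1$ local stability of $I_H$ and the coefficient bounds \eqref{DONALD_BROWN:upperbounds}, which is exactly where the mesh condition $\CIH\sqrt{\Col}Hk\le 1$ is needed to keep the extracted constant $\theta$ uniform in $k$. Since this argument uses $A$ and $V^2$ only through the bounds in \eqref{DONALD_BROWN:upperbounds}, the proof of \cite{DONALD_BROWN:Gallistl.Peterseim:2015} transfers with only cosmetic modifications.
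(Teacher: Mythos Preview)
Your outline is the standard localized-orthogonal-decomposition decay argument and matches the approach of \cite{DONALD_BROWN:Gallistl.Peterseim:2015}, to which the paper simply defers for this proof.

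There is, however, one genuine gap in your summation step for \eqref{DONALD_BROWN:e:CorrCloseness2}. The summand $\Cor_T v-\Cor_{T,m} v$ is \emph{not} supported in $\nei^m(T)$: only $\Cor_{T,m} v$ is, while the idealized corrector $\Cor_T v\in W_h$ is globally supported. Consequently the finite-overlap inequality you invoke does not apply as written, and a naive Cauchy--Schwarz would produce a factor $\#\G_H$ rather than $\Colm$. The remedy in \cite{DONALD_BROWN:Gallistl.Peterseim:2015} is to insert the compactly supported intermediate $\tilde\zeta_T:=(1-I_H)\big((1-\tilde\eta)\Cor_T v\big)\in W_h(\Omega_T)$ from your Step~2 into the sum, so that the overlap argument applies to $\sum_T(\tilde\zeta_T-\Cor_{T,m}v)$, while the remaining tail $\sum_T(\Cor_T v-\tilde\zeta_T)$ is handled by testing $\|\nabla e\|^2\lesssim\operatorname{Re}a(e,e)=\sum_T\operatorname{Re}a(e,e_T)$ and exploiting the Galerkin orthogonality $a(w,e_T)=0$ for $w\in W_h(\Omega_T)$ together with another cutoff to localize each term. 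With this correction your argument is complete.
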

\begin{proof}
 See Appendix of  \cite{DONALD_BROWN:Gallistl.Peterseim:2015}. \qed
\end{proof}

Provided we choose the fine-mesh $h$ small enough,
the standard finite element over the mesh $\G_h$ is stable in the sense that
there exists a constant $C_{\mathrm{FEM}}$
such that with $\gamma(k,\Omega,A,V^2)$ 
from \eqref{DONALD_BROWN:e:helmholtzstability} 
it holds that
\begin{equation}\label{DONALD_BROWN:e:finehstability}
 \big(C_{\mathrm{FEM}}\gamma(k,\Omega,A,V^2) \big)^{-1}
  \leq
  \inf_{v\in V_h\setminus\{0\}}\sup_{w\in V_h\setminus\{0\}} 
      \frac{\operatorname{Re} a(v,w)}{\|v\|_V\|w\|_V} .
\end{equation}
Recall, this is actually a condition on the fine-scale parameter $h$.
In general, the requirements on $h$ depend on the stability of the 
continuous problem \cite{DONALD_BROWN:melenk_phd}. We now recall the conditions on the oversampling parameter for the well-posedness of the discrete problem. Again, the proof here
does not rely heavily on the coefficients, just the general boundedness and ellipticity constants etc. Thus, we again refer the reader to \cite{DONALD_BROWN:Gallistl.Peterseim:2015}.

\begin{theorem}[Well-posedness of the discrete problem]\label{DONALD_BROWN:t:wellposed}
Under the resolution conditions \eqref{DONALD_BROWN:e:resolution}
and \eqref{DONALD_BROWN:e:finehstability}
and the following oversampling condition
\begin{equation}\label{DONALD_BROWN:e:mcondition}
m\gtrsim
\lvert\log\big( C_{\mathrm{FEM}}\gamma(k,\Omega,A,V^2)\big)\rvert
\Big/\lvert \log(\theta)\rvert,
\end{equation}
problem \eqref{DONALD_BROWN:e:discreteproblem} is well-posed and the constant
$C_{\mathrm{PG}}:=2C_{I_H,V}C_\Cor C_{\mathrm{FEM}}$ satisfies
\begin{equation*}
\big(C_{\mathrm{PG}}\gamma(k,\Omega,A,V^2)\big)^{-1}
\leq
 \inf_{v_H\in V_H\setminus\{0\}}
 \sup_{\tilde v_H\in \widetilde V_H\setminus\{0\}}
 \frac{\operatorname{Re} a(v_H,\tilde v_H)}{\|v_H\|_V\|\tilde v_H\|_V} .
\end{equation*}
\end{theorem}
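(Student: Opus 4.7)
The strategy is the familiar two-step argument of \cite{DONALD_BROWN:Gallistl.Peterseim:2015}: first one establishes an inf-sup bound on the ``ideal'' pair $V_H \times \widetilde V_H^{\mathrm{ideal}}$, where $\widetilde V_H^{\mathrm{ideal}} := (I-\Cor) V_H$, and then one transfers the bound to $V_H \times \widetilde V_H$ by a perturbation argument driven by Theorem \ref{DONALD_BROWN:t:CorrCloseness}. The heterogeneous setting enters only through the constants, all of which are $k$-independent thanks to \eqref{DONALD_BROWN:upperbounds} and the results already established in Lemma \ref{DONALD_BROWN:l:wellposedideal}, Theorem \ref{DONALD_BROWN:boundednesstheorem}, and Theorem \ref{DONALD_BROWN:Theorem1}, so no new ideas beyond the homogeneous case are needed.

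The ideal step rests on the orthogonality
\[
  a(w, \tilde v) = 0 \qquad \text{for all } w\in W_h \text{ and } \tilde v \in \widetilde V_H^{\mathrm{ideal}},
\]
which is immediate from the definition \eqref{DONALD_BROWN:e:idealElementCorrProb} of $\Cor$. Given any $\tilde v \in \widetilde V_H^{\mathrm{ideal}}$, I would apply the fine-scale inf-sup \eqref{DONALD_BROWN:e:finehstability} --- equivalent to the statement with the inf and sup interchanged, since $V_h$ is finite-dimensional and a phase rotation shows that $\operatorname{Re} a$ and $|a|$ produce the same sup --- to obtain $y_h\in V_h$ with $\|y_h\|_V \lesssim 1$ and $\operatorname{Re} a(y_h, \tilde v) \gtrsim (C_{\mathrm{FEM}}\gamma)^{-1}\|\tilde v\|_V$. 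Writing $y_h = I_H y_h + (y_h - I_H y_h)$ and discarding the $W_h$-summand via the orthogonality above, the candidate $v_H := I_H y_h \in V_H$ satisfies the same lower bound and $\|v_H\|_V \le C_{I_H,V}$ by \eqref{DONALD_BROWN:e:IHapproxstabV}. This delivers the ideal inf-sup constant $(C_{I_H,V} C_{\mathrm{FEM}} \gamma)^{-1}$.

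For the perturbation step, given $v_H\in V_H$ I would select $\tilde v^{\mathrm{ideal}}\in \widetilde V_H^{\mathrm{ideal}}$ realising the ideal sup, write it uniquely as $\tilde v^{\mathrm{ideal}} = v_H' - \Cor v_H'$ with $v_H' := I_H \tilde v^{\mathrm{ideal}}\in V_H$ (so $\|v_H'\|_V \lesssim \|\tilde v^{\mathrm{ideal}}\|_V$), and take the computable test function $\tilde v_H := v_H' - \Cor_m v_H' \in \widetilde V_H$. The difference $\tilde v_H - \tilde v^{\mathrm{ideal}} = (\Cor - \Cor_m) v_H'$ is controlled in the $V$-norm by $C\sqrt{\Colm}\,\theta^m\|v_H'\|_V$ via Theorem \ref{DONALD_BROWN:t:CorrCloseness} combined with the norm equivalence on $W_h$ from Lemma \ref{DONALD_BROWN:l:wellposedideal}. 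Estimating $|a(v_H, \tilde v_H - \tilde v^{\mathrm{ideal}})|$ through the continuity constant $C_a$ from Theorem \ref{DONALD_BROWN:boundednesstheorem} and noting that the oversampling condition \eqref{DONALD_BROWN:e:mcondition} --- recalling that $\Colm$ is polynomial in $m$ --- renders $C \sqrt{\Colm}\,\theta^m$ smaller than half the ideal inf-sup constant yields the claimed bound with $C_{\mathrm{PG}} = 2 C_{I_H,V} C_\Cor C_{\mathrm{FEM}}$, up to harmless universal factors.

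The main obstacle is the quantitative balance between the exponential smallness of the corrector truncation error and the possibly polynomial $k$-growth of $\gamma$: the condition \eqref{DONALD_BROWN:e:mcondition} is imposed precisely because the perturbation step requires $\sqrt{\Colm}\,\theta^m \lesssim (C_{\mathrm{FEM}}\gamma)^{-1}$. With the heterogeneous stability (Theorem \ref{DONALD_BROWN:Theorem1}), boundedness (Theorem \ref{DONALD_BROWN:boundednesstheorem}), idealised coercivity on $W_h$ (Lemma \ref{DONALD_BROWN:l:wellposedideal}), and exponential decay (Theorem \ref{DONALD_BROWN:t:CorrCloseness}) all in hand with $k$-independent constants, the two-step argument proceeds verbatim as in the homogeneous case.
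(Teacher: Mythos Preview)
Your proposal is correct and follows exactly the two-step (ideal inf-sup plus localisation perturbation) argument of \cite{DONALD_BROWN:Gallistl.Peterseim:2015}, which is all the paper's own proof consists of---a bare citation to that reference. The only point worth making explicit is that between your two steps you silently swap the roles of $V_H$ and $\widetilde V_H^{\mathrm{ideal}}$ in the ideal inf-sup; this is justified by the same equal-dimension and phase-rotation reasoning you already invoked for $V_h$, since $(I-\Cor)$ and $I_H$ are mutual inverses between $V_H$ and $\widetilde V_H^{\mathrm{ideal}}$.
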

\begin{proof}
 See   \cite{DONALD_BROWN:Gallistl.Peterseim:2015}. \qed
\end{proof}

The quasi-optimality  requires the following additional condition
on the oversampling parameter $m$,
\begin{equation}\label{DONALD_BROWN:e:oversampling2}
m\gtrsim
\lvert
\log\Big( 
 C_{\mathrm{PG}}\gamma(k,\Omega,A,V^2) 
\Big)
\rvert
\Big/ \lvert \log(\theta) \rvert.
\end{equation}

\begin{theorem}[Quasi-optimality]\label{DONALD_BROWN:t:quasiopt}
 The resolution conditions \eqref{DONALD_BROWN:e:resolution} and
 \eqref{DONALD_BROWN:e:finehstability}
 and the oversampling
 conditions \eqref{DONALD_BROWN:e:mcondition} and 
 \eqref{DONALD_BROWN:e:oversampling2}
 imply that the solution $u_H$ to
 \eqref{DONALD_BROWN:e:discreteproblem} with parameters $H$, $h$, and $m$ 
 and the solution $u_h$ of the standard Galerkin FEM on the mesh
 $\G_h$ satisfy
\begin{equation*}
\|u_h - u_H\|_V
\lesssim
  \|(1-I_H) u_h \|_V
\approx 
  \min_{v_H\in V_H} \| u_h - v_H \|_V .
\end{equation*}
\end{theorem}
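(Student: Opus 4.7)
The plan is to bound $\|u_h-u_H\|_V$ by triangle inequality through the Galerkin candidate $I_H u_h \in V_H$, exploiting the identity $\|u_h - I_H u_h\|_V = \|(1-I_H)u_h\|_V$ and reducing the task to estimating $\|I_H u_h - u_H\|_V$. Since $\widetilde V_H \subset V_h$, one has Galerkin consistency $a(u_h - u_H, \tilde v_H) = 0$ for every $\tilde v_H \in \widetilde V_H$. Combined with the discrete inf-sup bound of Theorem \ref{DONALD_BROWN:t:wellposed} applied to $I_H u_h - u_H \in V_H$, this yields
\begin{equation*}
\|I_H u_h - u_H\|_V
\lesssim
C_{\mathrm{PG}}\gamma(k,\Omega,A,V^2)
\sup_{\tilde v_H\in\widetilde V_H\setminus\{0\}}
\frac{|a((1-I_H)u_h,\tilde v_H)|}{\|\tilde v_H\|_V}.
\end{equation*}
A crucial point used throughout is that $(1-I_H)u_h \in W_h$, which follows from $u_h \in V_h$ together with the projection property $I_H^2 = I_H$.

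The essential step is then to exploit the idealized corrector identity. Decomposing an arbitrary test function as
\begin{equation*}
\tilde v_H = v_H - \Cor_m v_H = (v_H - \Cor v_H) + (\Cor - \Cor_m) v_H,
\end{equation*}
the defining property \eqref{DONALD_BROWN:e:idealElementCorrProb} of $\Cor$, tested against $(1-I_H)u_h \in W_h$, gives $a((1-I_H)u_h, v_H - \Cor v_H) = 0$, and thus
\begin{equation*}
a((1-I_H)u_h, \tilde v_H) = a((1-I_H)u_h, (\Cor - \Cor_m) v_H).
\end{equation*}
Boundedness of $a$ (Theorem \ref{DONALD_BROWN:boundednesstheorem}), the norm equivalence on $W_h$ from Lemma \ref{DONALD_BROWN:l:wellposedideal}, and the localization estimate \eqref{DONALD_BROWN:e:CorrCloseness2} bound $\|(\Cor - \Cor_m)v_H\|_V \lesssim \sqrt{\Colm}\,\theta^m\,\|v_H\|_V$. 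Finally, $I_H \tilde v_H = v_H$ (because $I_H v_H = v_H$ and $I_H \Cor_m v_H = 0$), so \eqref{DONALD_BROWN:e:IHapproxstabV} yields $\|v_H\|_V \leq C_{I_H,V}\|\tilde v_H\|_V$, giving
\begin{equation*}
|a((1-I_H)u_h, \tilde v_H)| \lesssim \sqrt{\Colm}\,\theta^m\, \|(1-I_H)u_h\|_V\, \|\tilde v_H\|_V.
\end{equation*}

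Inserting this into the inf-sup estimate, the oversampling condition \eqref{DONALD_BROWN:e:oversampling2} is precisely what ensures $C_{\mathrm{PG}}\gamma(k,\Omega,A,V^2)\sqrt{\Colm}\,\theta^m \lesssim 1$, so $\|I_H u_h - u_H\|_V \lesssim \|(1-I_H)u_h\|_V$, and the triangle inequality produces the first claim. For the equivalence with the best approximation error, the lower bound is obtained by taking $v_H = I_H u_h$, and the upper bound from the identity $(1-I_H)u_h = (1-I_H)(u_h - v_H)$ valid for every $v_H \in V_H$ (using $I_H v_H = v_H$) combined with the $V$-stability of $I_H$ in \eqref{DONALD_BROWN:e:IHapproxstabV}. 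The main technical obstacle is balancing the localization error against the inf-sup constant: the exponential factor $\theta^m$ from Theorem \ref{DONALD_BROWN:t:CorrCloseness} must dominate both the polynomial growth of $\sqrt{\Colm}$ in $m$ and the (at most polynomial in $k$) growth of $\gamma(k,\Omega,A,V^2)$, which is exactly what \eqref{DONALD_BROWN:e:oversampling2} encodes. The heterogeneity enters only through the constants in \eqref{DONALD_BROWN:upperbounds}, which appear in $C_{I_H,V}$ and the norm equivalence of Lemma \ref{DONALD_BROWN:l:wellposedideal}; no new structural argument beyond the homogeneous analysis of \cite{DONALD_BROWN:Gallistl.Peterseim:2015} is required.
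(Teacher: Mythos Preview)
Your argument is correct and follows exactly the approach of \cite{DONALD_BROWN:Gallistl.Peterseim:2015}, which is precisely what the paper defers to for this proof. The key ingredients---Galerkin consistency via $\widetilde V_H\subset V_h$, the discrete inf--sup bound, the orthogonality $a(w,v_H-\Cor v_H)=0$ for $w\in W_h$, and the exponential decay estimate \eqref{DONALD_BROWN:e:CorrCloseness2} absorbed by the oversampling condition---are assembled just as in the homogeneous case, and your remark that the heterogeneity enters only through the constants of \eqref{DONALD_BROWN:upperbounds} matches the paper's own observation that the analysis ``does not differ too far from the homogeneous coefficient algorithm.''
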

\begin{proof}
 See   \cite{DONALD_BROWN:Gallistl.Peterseim:2015}. \qed
\end{proof}

The following consequence of Theorem~\ref{DONALD_BROWN:t:quasiopt} states an
estimate for the error $u-u_H$.

\begin{corollary}
Under the conditions of Theorem~\ref{DONALD_BROWN:t:quasiopt}, the discrete
solution $u_H$ to \eqref{DONALD_BROWN:e:discreteproblem} satisfies with some
constant $C\approx 1$ that
\begin{equation*}
\| u- u_H \|_V
\leq
  \| u- u_h \|_V
  +
  C \min_{v_H\in V_H} \| u_h - v_H \|_V .
  \qquad
\end{equation*}

\end{corollary}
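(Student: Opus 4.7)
The plan is to derive the bound directly from a single triangle inequality combined with the quasi-optimality statement of Theorem~\ref{DONALD_BROWN:t:quasiopt}. Specifically, I would insert the intermediate function $u_h$ (the solution of the standard Galerkin FEM on the fine mesh $\G_h$) between $u$ and $u_H$ and split
\begin{equation*}
\| u - u_H \|_V \leq \| u - u_h \|_V + \| u_h - u_H \|_V .
\end{equation*}
The first term on the right-hand side is already in the desired form and needs no further treatment.

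For the second term, I would apply Theorem~\ref{DONALD_BROWN:t:quasiopt}, whose hypotheses are precisely those assumed in the corollary. That theorem yields
\begin{equation*}
\| u_h - u_H \|_V \lesssim \| (1 - I_H) u_h \|_V \approx \min_{v_H \in V_H} \| u_h - v_H \|_V ,
\end{equation*}
so there is a constant $C \approx 1$ (absorbing the hidden constant in $\lesssim$ and the equivalence constant $\approx$) such that $\| u_h - u_H \|_V \leq C \min_{v_H \in V_H} \| u_h - v_H \|_V$. Substituting this into the triangle inequality yields the claim.

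There is essentially no obstacle here: the corollary is a direct bookkeeping consequence of the quasi-optimality theorem. The only mild point worth mentioning is that the constant $C$ depends on the constants implicit in the $\lesssim$ and $\approx$ of Theorem~\ref{DONALD_BROWN:t:quasiopt}, which in turn depend on $C_{I_H,V}$, $C_\Cor$, and $C_{\mathrm{FEM}}$, all of which are independent of $k$ under the resolution and oversampling conditions \eqref{DONALD_BROWN:e:resolution}, \eqref{DONALD_BROWN:e:finehstability}, \eqref{DONALD_BROWN:e:mcondition} and \eqref{DONALD_BROWN:e:oversampling2}; hence $C \approx 1$ as asserted.
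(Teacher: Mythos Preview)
Your argument is correct and is exactly the intended one: the paper presents this corollary as an immediate consequence of Theorem~\ref{DONALD_BROWN:t:quasiopt} without writing out a proof, and the triangle inequality with $u_h$ inserted is the only natural route.
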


For the class of coefficients described in 
Theorem~\ref{DONALD_BROWN:Theorem1}, this leads to the following
convergence rates. Provided that the geometry allows for $H^2$
regularity of the solution and that $h$ is sufficiently small
such that the standard FEM is quasi-optimal on the fine scale
$h$ and the error is dominated by the coarse-scale part,
we have
$$
\| u- u_H \|_V \leq \mathcal{O}(kH) .
$$

\section{Numerical Examples}\label{DONALD_BROWN:numericalexamples}

In this section, we present the results from our numerical experiments
on a smooth coefficient for 
 both cases when the conditions are satisfied and when it is violated. 
 Further, we implement the method on discontinuous periodic coefficients to highlight  broader applicability of the method.
We give 3 example coefficients; 
based on \eqref{DONALD_BROWN:numcoef1}, \eqref{DONALD_BROWN:numcoef2}, and a discontinuous example.
In all three experiments we took $\Omega=(-1,1)^2$ to be the unit square.
We use triangular meshes and continuous $P_1$ finite elements as
trial functions.
We used $k=2^5$, $g=0,$ and the approximate  point source
$$
f(x) =
\begin{cases}
\exp\left(-\frac{1}{1-(20|x|)^2}\right)& \text{for } |x| < 1/20\\
0 & \text{else}.
\end{cases}
$$
The coarse-scale mesh-sizes are $H=2^{-3},2^{-4},2^{-5},2^{-6}$
and the fine-scale mesh-size is $h=2^{-8}$.

The convergence history plots display the errors in the 
$\|\cdot\|_V$ norm as well as $L^2$ norms.
We compare the multiscale Petrov-Galerkin method for
oversampling parameters $m=1,2,3$ with the standard $P_1$
finite element method and the best-approximation.
To compute the error quantity we take the standard finite
element solution at the fine scale $h$ to be the overkill solution.

\begin{figure}[pt]
\subfigure[The coefficient $V^2$ for example 1. \label{DONALD_BROWN_subfigCoeff1}]{
\includegraphics*[width=.45\textwidth]{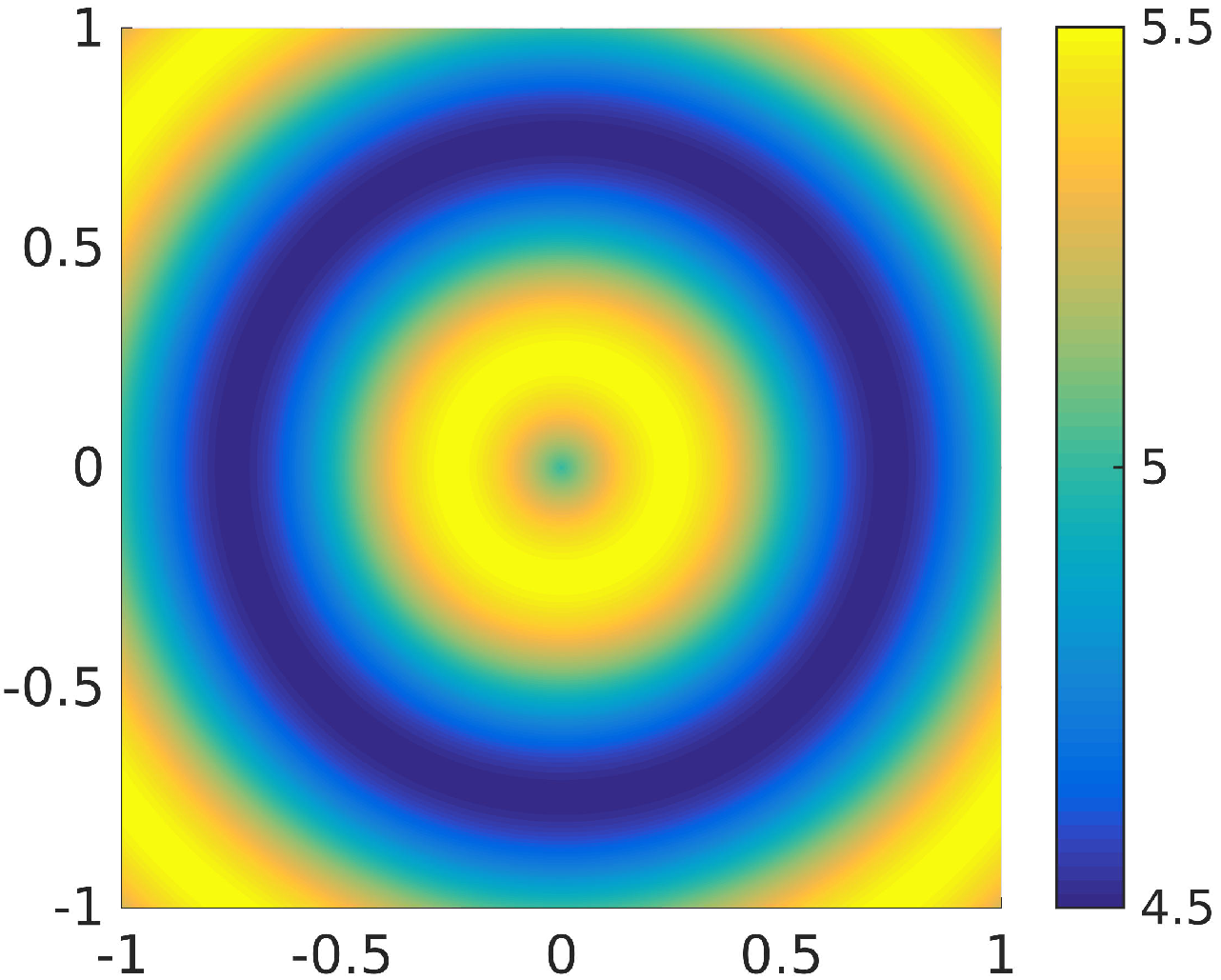}
}
\subfigure[Plot of the solution for example 1. \label{DONALD_BROWN_subfigSol1}]{
\includegraphics*[width=.45\textwidth]{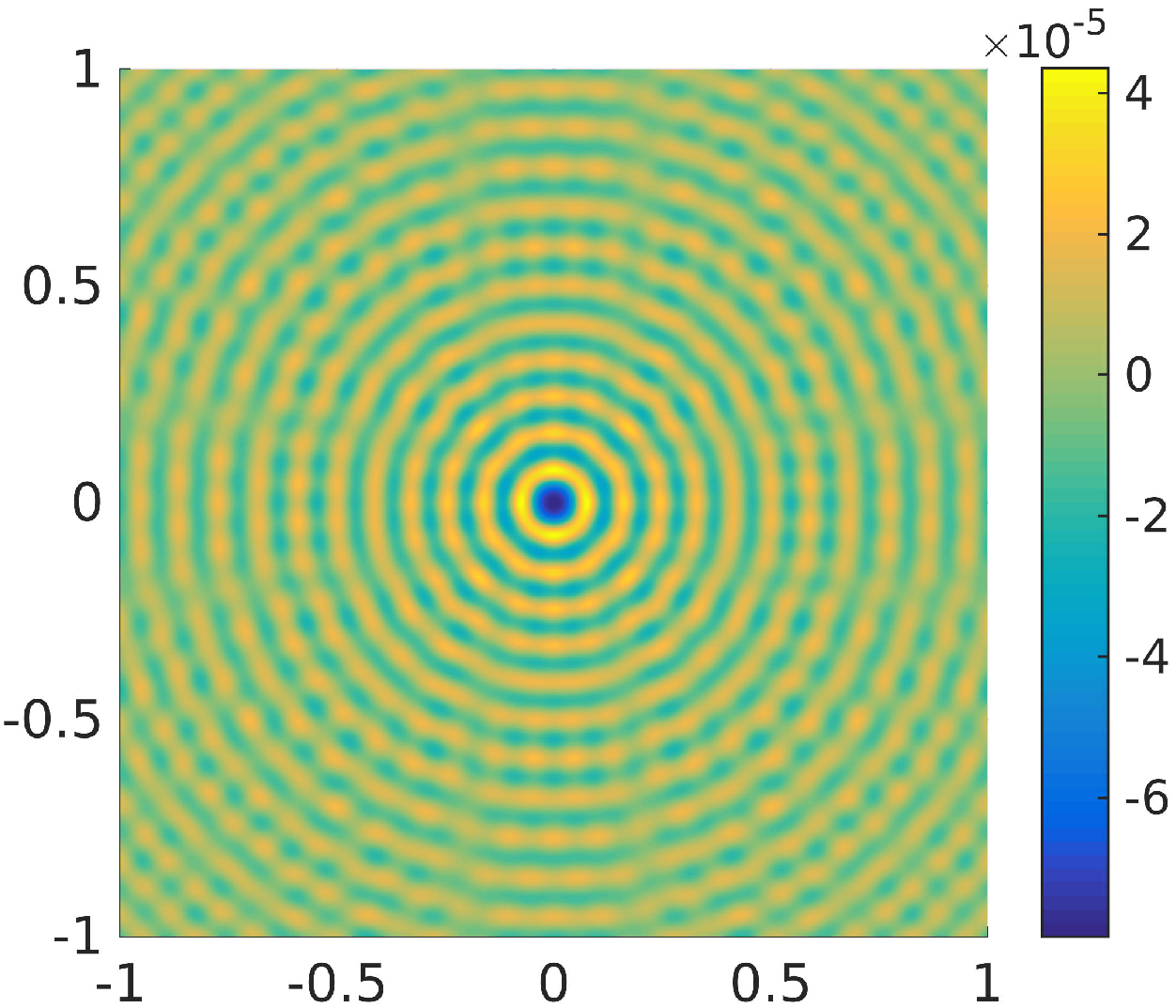}
}
\caption{Plots for example 1.\label{DONALD_BROWN_figVisualize1}}
\end{figure}

For the first example, we take $A=1$ and  $V^2$ as \eqref{DONALD_BROWN:numcoef1}. with $\epsilon = 1$ and refer to this as example 1.
Note that this does not violate the stability condition.
The coefficient $V^2$ is displayed in Figure~\ref{DONALD_BROWN_subfigCoeff1} and the corresponding computational solution is displayed in Figure~\ref{DONALD_BROWN_subfigSol1}.
Figures~\ref{DONALD_BROWN_subfigConvhistV1}--\ref{DONALD_BROWN_subfigConvhistL21}
display the convergence history in the $V$-norm and the $L^2$ norm 
for example 1. In general, we see that the multiscale method 
appears to perform much better than the corresponding standard $P_1$ finite element. However, there appears to be some
resonance effects of some sort that is particularly pronounced in
the $V$ norm just before the resolution condition is satisfied.
This is not in contradiction with the theory. It has been 
demonstrated in \cite{DONALD_BROWN:Gallistl.Peterseim:2015} that there
is no decay of the corrector functions if the resolution condition
is not satisfied, so that in this regime the localization is not
justified and leads to unreliable results.

\begin{figure}[pt]
\subfigure[Convergence in $V$ norm: example 1. \label{DONALD_BROWN_subfigConvhistV1}]{
\includegraphics*[width=.45\textwidth]{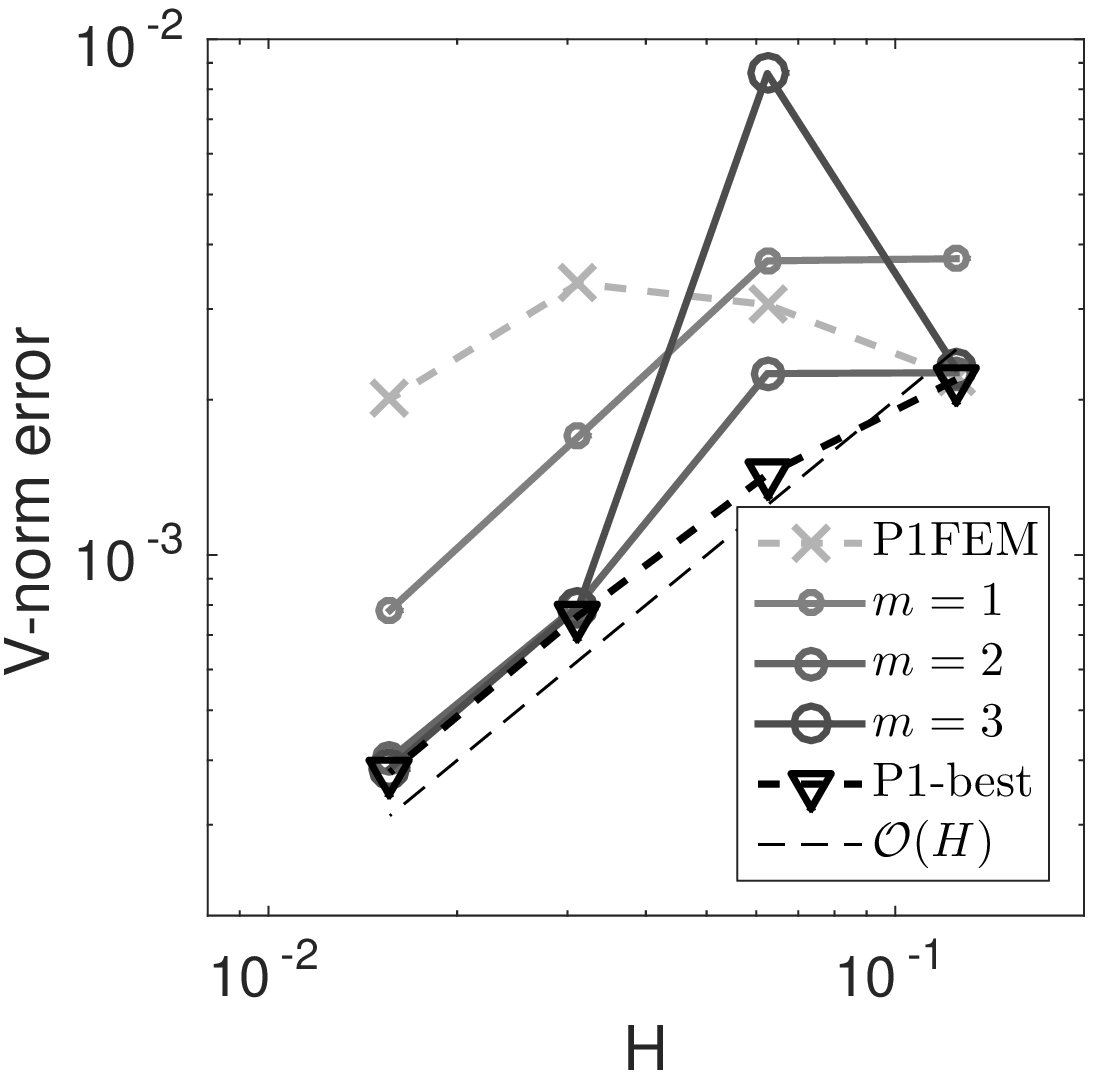}
}
\subfigure[Convergence in $L^2$ norm:  example 1. \label{DONALD_BROWN_subfigConvhistL21}]{
\includegraphics*[width=.45\textwidth]{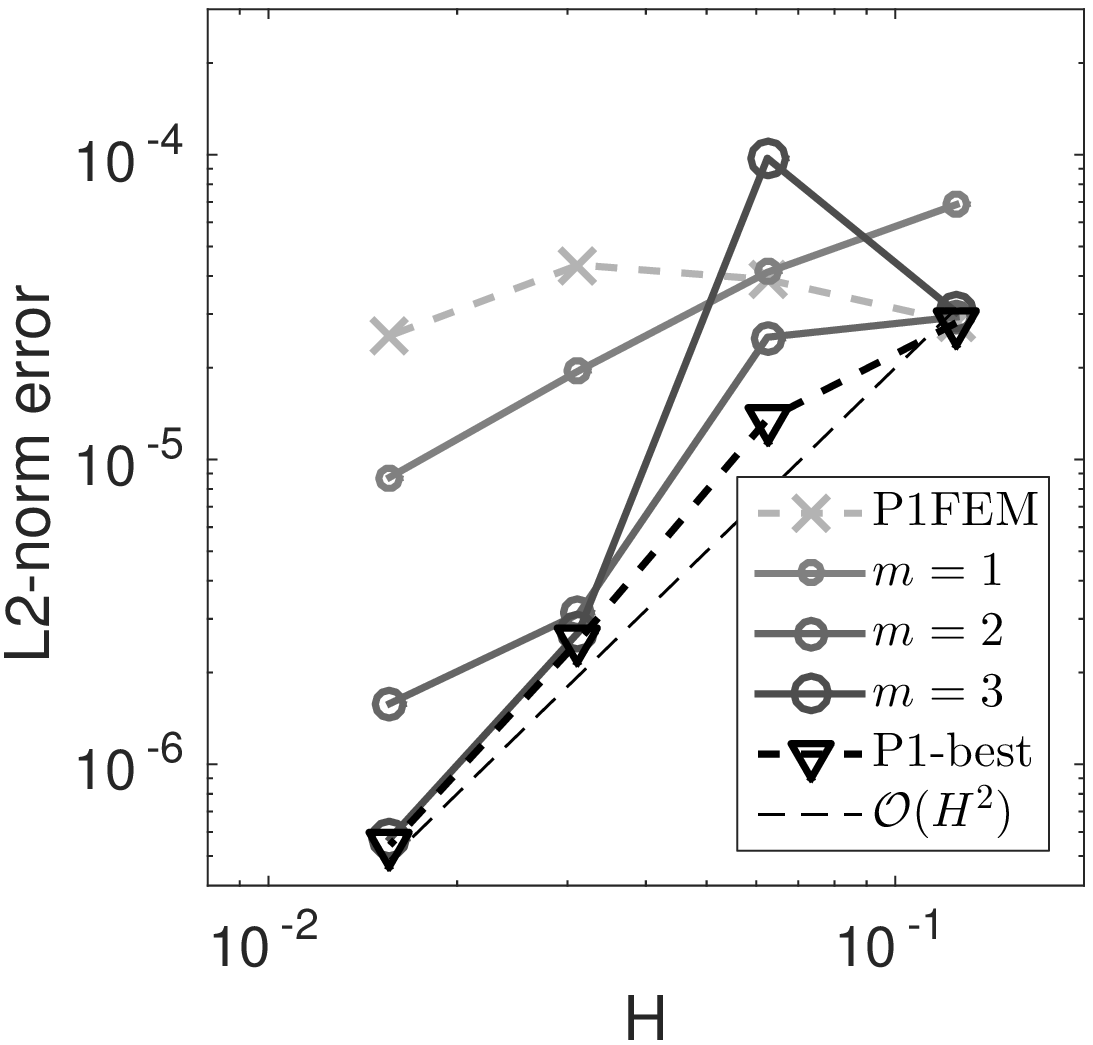}
}
\caption{Convergence history for example 1.\label{DONALD_BROWN_figConvergence1}}
\end{figure}

For the second example, we take $A=V^2$ and  $V^2$ as \eqref{DONALD_BROWN:numcoef2}, and refer to this as example 2.
For the parameters we took $\delta=1$, $\epsilon = 0.1$, $\alpha=0.08$, and note that the corresponding stability condition
$\alpha\exp(2\alpha)< \epsilon$ is narrowly satisfied. 
\begin{figure}[pt]
\subfigure[The coefficient $V^2$ for example 2. \label{DONALD_BROWN_subfigCoeff2}]{
\includegraphics*[width=.45\textwidth]{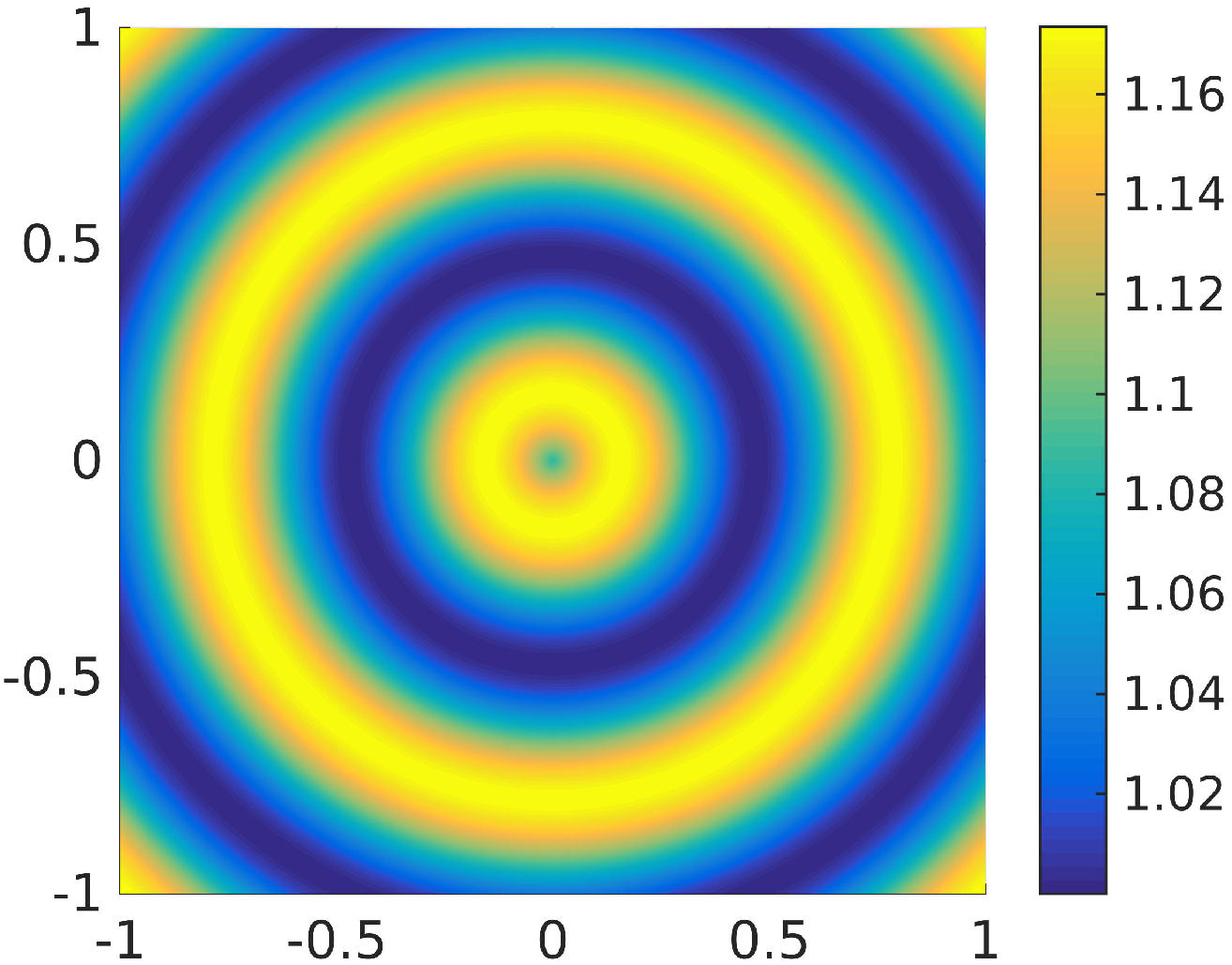}
}
\subfigure[Plot of the solution for example 2. \label{DONALD_BROWN_subfigSol2}]{
\includegraphics*[width=.45\textwidth]{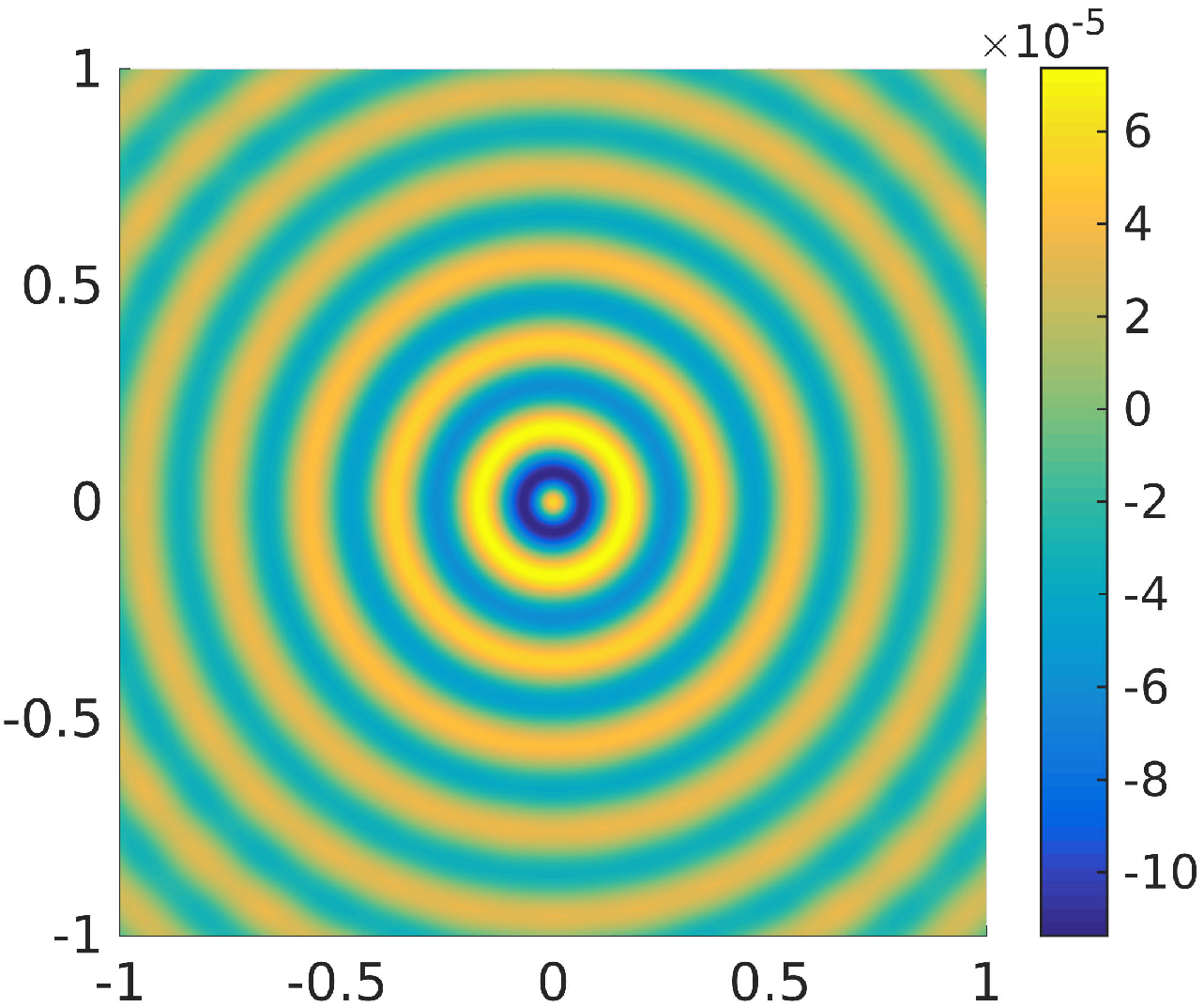}
}
\caption{Plots for example 2.\label{DONALD_BROWN_figVisualize2}}
\end{figure}
The coefficient $V^2$ is displayed in Figure~\ref{DONALD_BROWN_subfigCoeff2} and the computational solution is displayed in Figure~\ref{DONALD_BROWN_subfigSol2}.
Figures~\ref{DONALD_BROWN_subfigConvhistV2}--\ref{DONALD_BROWN_subfigConvhistL22}
display the convergence history in the $V$-norm and the $L^2$ norm 
for example 2. We see that in this example, we achieve faster convergence and do not see the resonance effects. This is also the case for the standard finite elements. 
\begin{figure}[pt]
\subfigure[Convergence  in $V$ norm example 2. \label{DONALD_BROWN_subfigConvhistV2}]{
\includegraphics*[width=.45\textwidth]{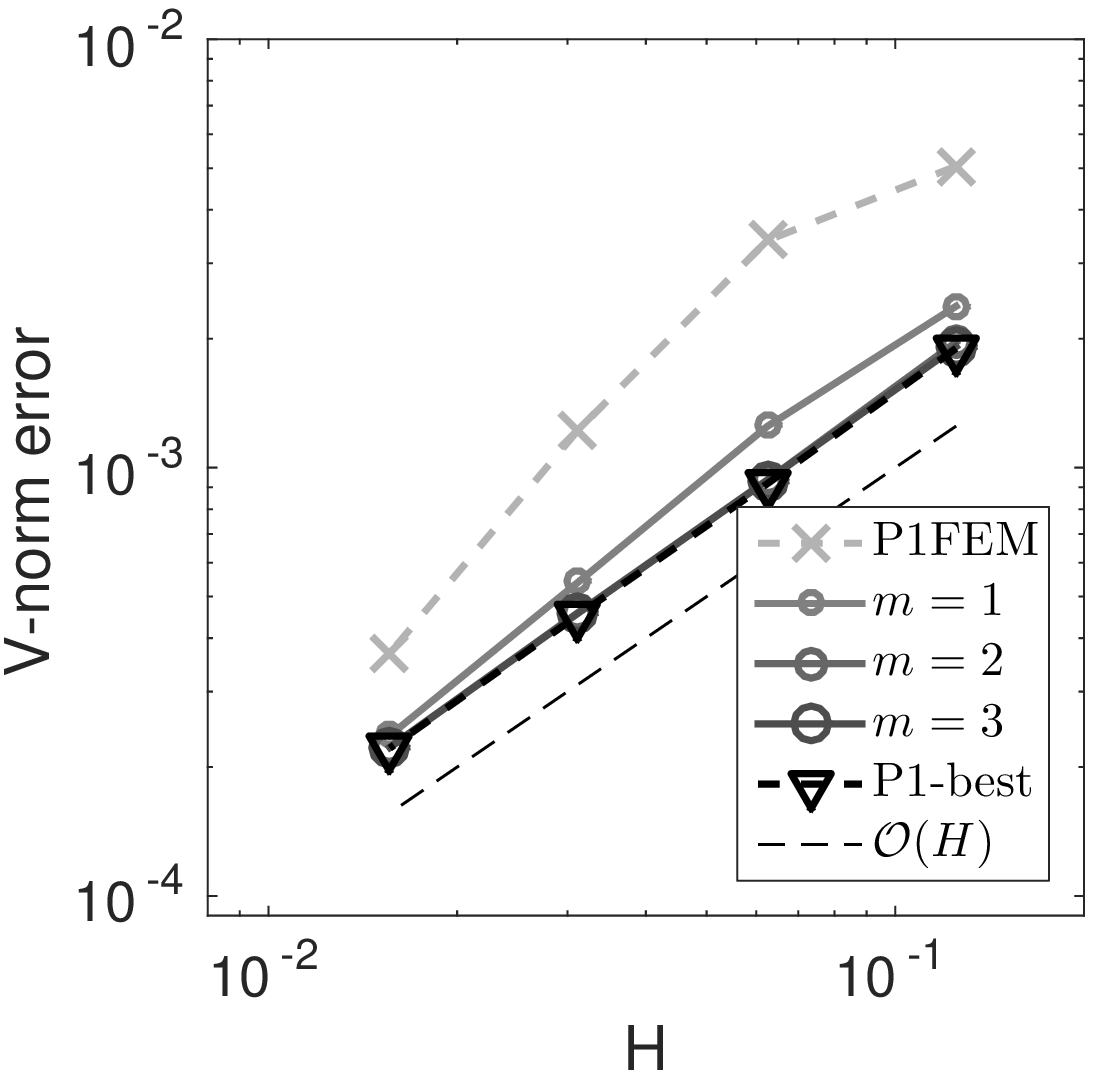}
}
\subfigure[Convergence  in $L^2$ norm  example 2. \label{DONALD_BROWN_subfigConvhistL22}]{
\includegraphics*[width=.45\textwidth]{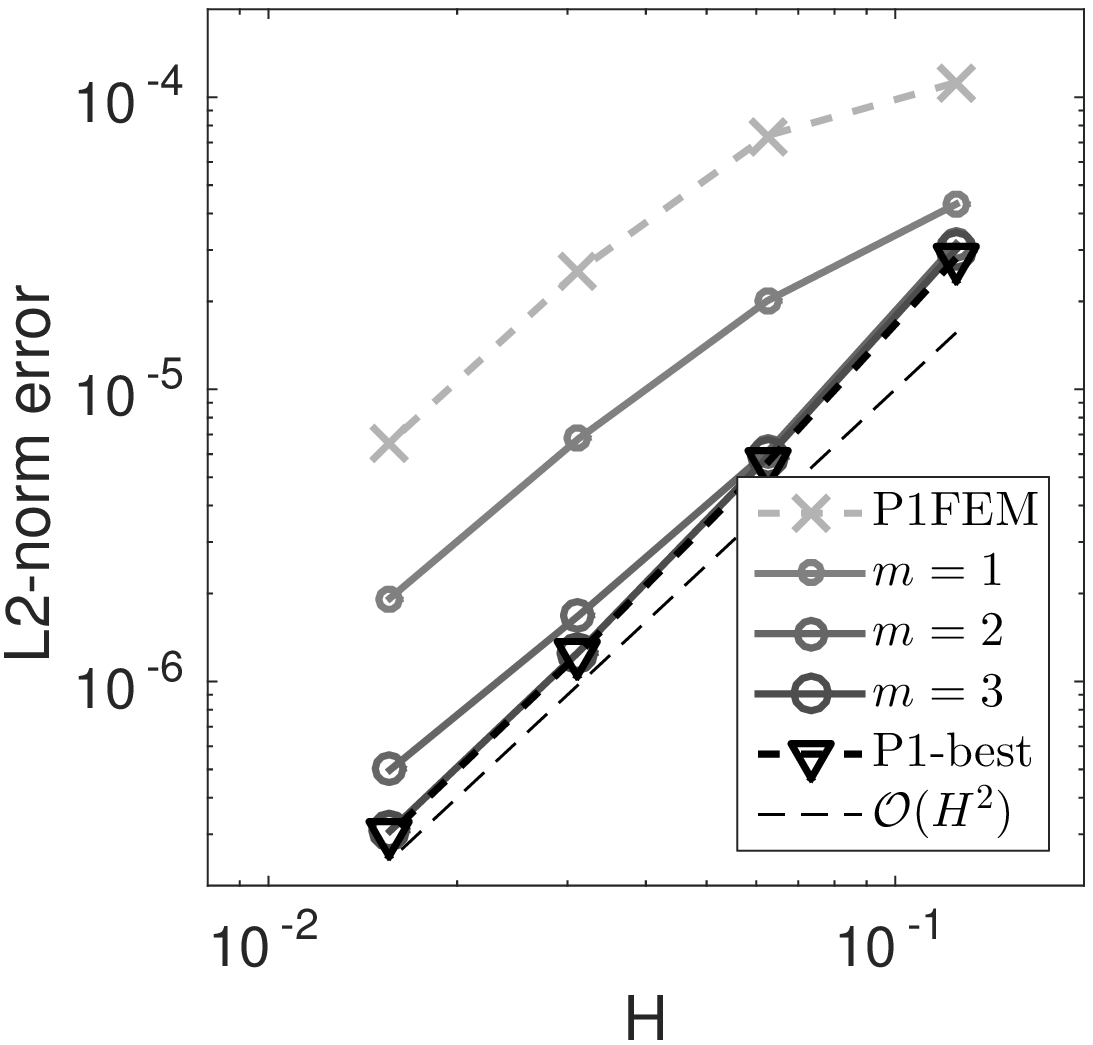}
}
\caption{Convergence history for example 2.\label{DONALD_BROWN_figConvergence2}}
\end{figure}


\begin{figure}[pt]
\subfigure[The  coefficient $V^2$ for example 3. \label{DONALD_BROWN_subfigCoeff3}]{
\includegraphics*[width=.45\textwidth]{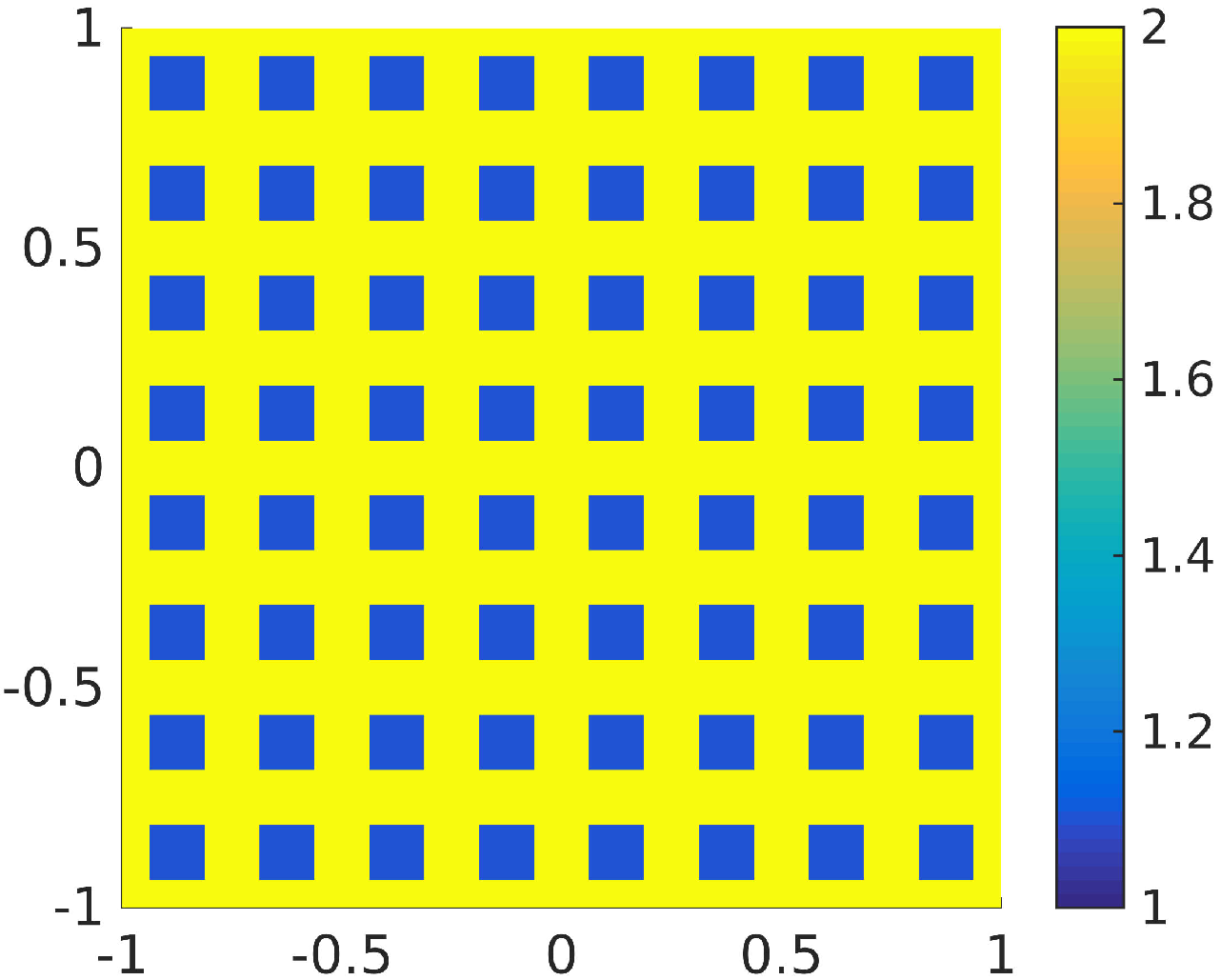}
}
\subfigure[Plot of the solution for example 3. \label{DONALD_BROWN_subfigSol3}]{
\includegraphics*[width=.45\textwidth]{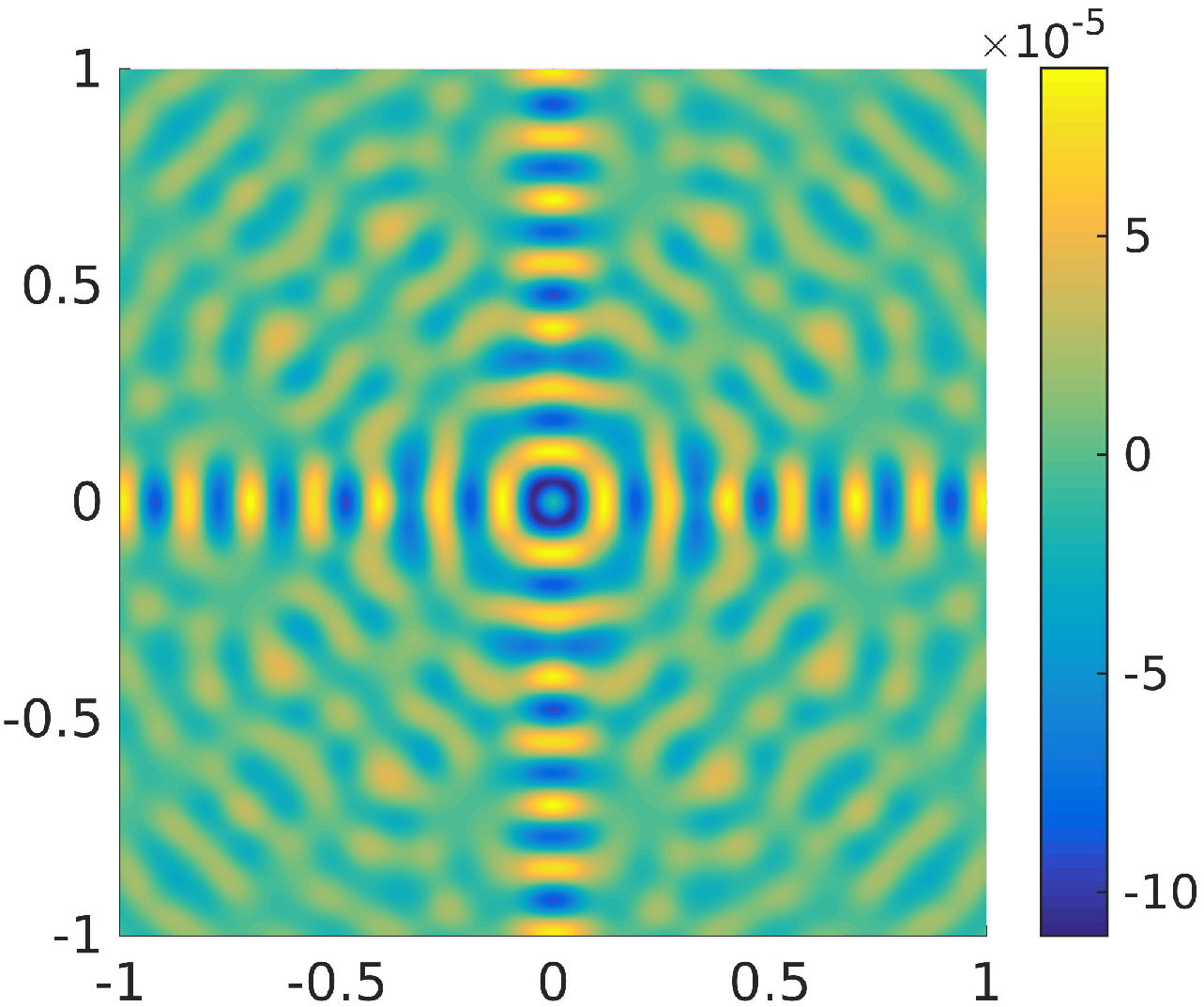}
}
\caption{Plots for example 3.\label{DONALD_BROWN_figVisualize3}}
\end{figure}

We now present a numerical example outside of our stability theory.
We take $V^2=2$ except at periodically placed blocks where $V^2=1$ 
and plot the function in  Figure~\ref{DONALD_BROWN_subfigCoeff3}. 
We refer to this as example 3.
The computational solution is displayed in Figure~\ref{DONALD_BROWN_subfigSol3}.
\begin{figure}[pt]
\subfigure[Convergence in $V$ norm  example 3. \label{DONALD_BROWN_subfigConvhistV3}]{
\includegraphics*[width=.45\textwidth]{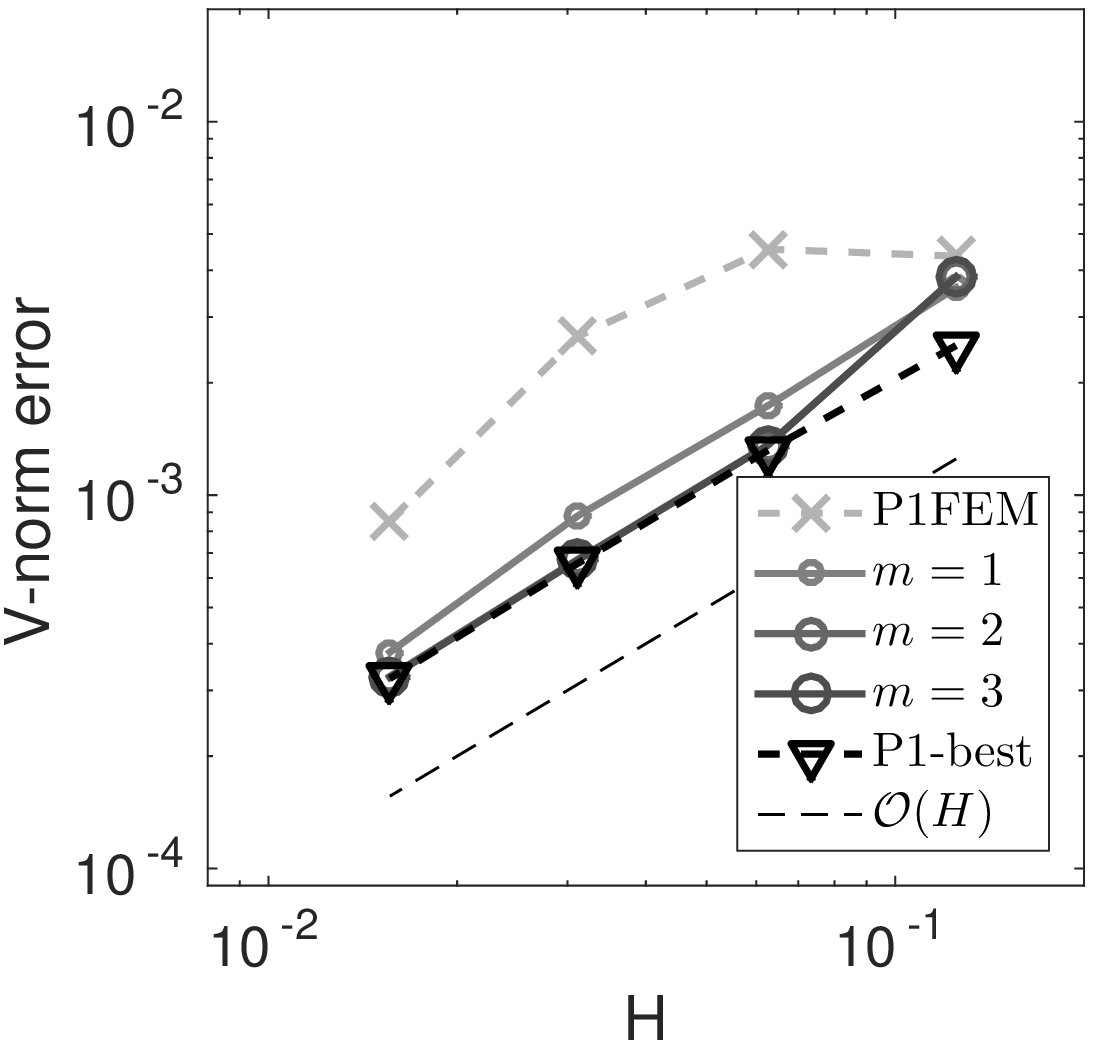}
}
\subfigure[Convergence  in $L^2$ norm   example 3. \label{DONALD_BROWN_subfigConvhistL23}]{
\includegraphics*[width=.45\textwidth]{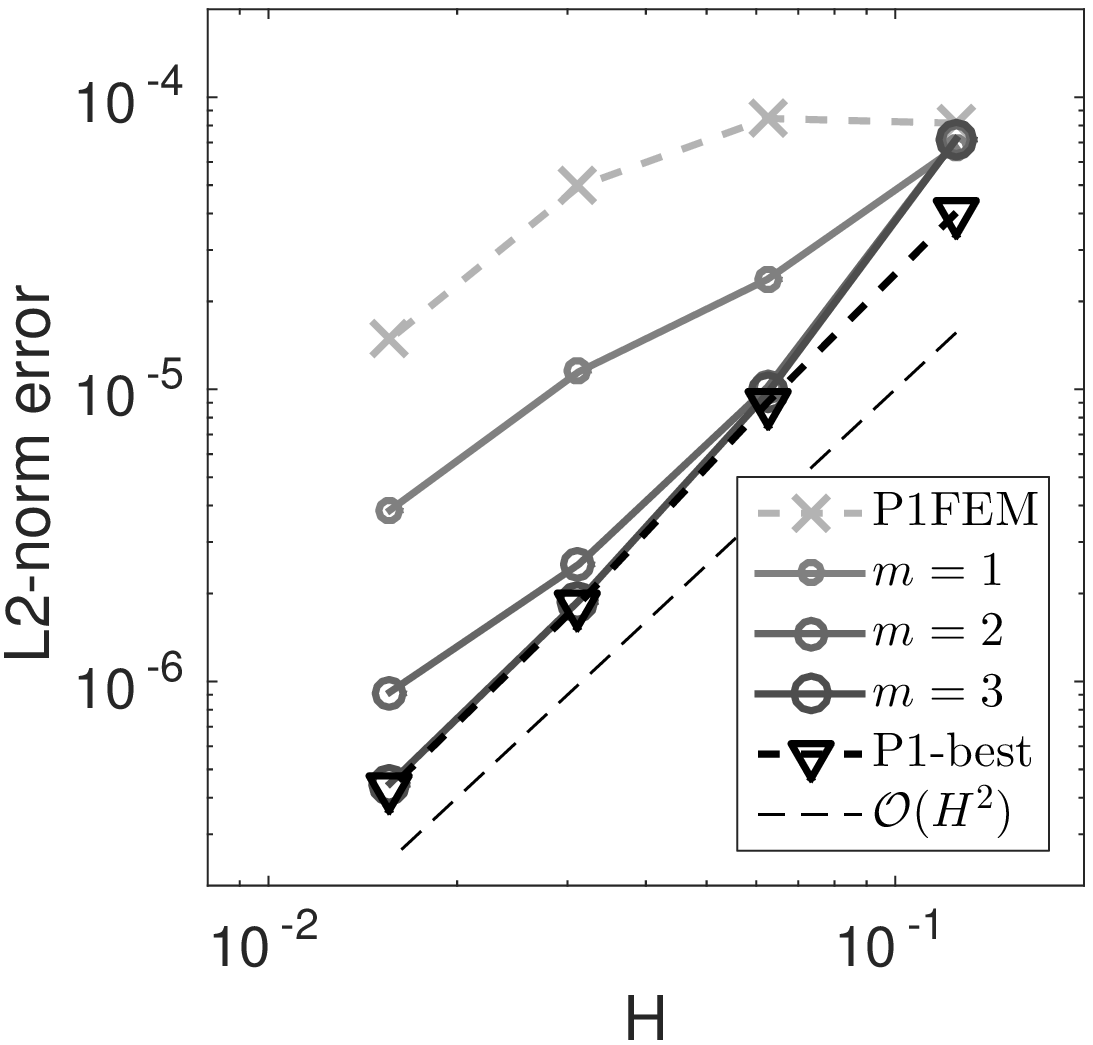}
}
\caption{Convergence history for example 3.\label{DONALD_BROWN_figConvergence3}}
\end{figure}
Figures~\ref{DONALD_BROWN_subfigConvhistV3}--\ref{DONALD_BROWN_subfigConvhistL23}
display the convergence history in the $V$-norm and the $L^2$ norm 
for example 3. We observe that the method performs particularly well in this example, especially when compared against the corresponding $P_1$ finite element. 
We do not see the resonances as with example 1.

\section{Conclusions}

In this work, we developed a multiscale method to efficiently solve the heterogeneous Helmholtz equation at high frequency.
The primary challenge was establishing $k$-explicit bounds for the continuous problem as these are critical in the analysis of the patch truncation parameter.
We established these bounds for a class of smooth coefficients given some restrictions that appear to depend heavily on the frequency of oscillations and the amplitude of the
coefficients. We then presented our multiscale method whose error analysis is not significantly modified by the heterogeneities assuming standard upper and lower boundedness.
Finally, we implemented the algorithm on two coefficients that fit inside the class of coefficients in our main theorem and one that is discontinuous. We see that the
method performs well in these cases. Future work includes exploring if these stability estimates apply to a greater class of more heterogeneous coefficients with less smoothness.

\section*{Appendix: Proof of Stability}\label{DONALD_BROWN:proofappendix}
\addcontentsline{toc}{section}{Appendix: Proof of Stability}

\subsection*{Technical and Auxiliary Lemmas }
We will now proceed by recalling and demonstrating a few technical and auxiliary Lemmas used in the proof of Theorem \ref{DONALD_BROWN:Theorem1}. 
We begin with two critical technical lemmas that remain unchanged from the homogeneous case examined in \cite{DONALD_BROWN:hetmaniuk} and are repeated here for completeness. 
\begin{lemma}
 Let $m\in W^{1,\infty}(\dmn)^d$ and for  all $q\in H^1(\dmn)$ we have 
 \end{lemma}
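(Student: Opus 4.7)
The plan is to prove this Rellich-type multiplier identity by the usual divergence-theorem argument combined with a density/approximation step to handle the rough regularity of $q$ and $m$. I would first establish the identity in the smooth case and then bootstrap via density.

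First I would fix smooth data $q\in C^\infty(\overline{\dmn})$ and $m\in C^1(\overline{\dmn})^d$ and start from the pointwise product-rule identity
\begin{equation*}
2\operatorname{Re}\bigl((m\cdot\nabla q)\,\bar q\bigr)
= m\cdot \nabla(|q|^2),
\end{equation*}
which holds because $\nabla(|q|^2)=2\operatorname{Re}(\bar q \,\nabla q)$ for complex-valued $q$. Then I would apply the Leibniz rule for the divergence to the vector field $m\,|q|^2$, obtaining $\ddiv(m\,|q|^2)=|q|^2\,\ddiv(m)+m\cdot\nabla(|q|^2)$, and invoke the Gauss divergence theorem on $\dmn$ (valid since $\dmn$ is Lipschitz and the integrand is smooth). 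This yields
\begin{equation*}
\int_{\partial\dmn}(m\cdot\nu)\,|q|^2\,ds
= \int_{\dmn}|q|^2\,\ddiv(m)\,dx
+ 2\operatorname{Re}\!\int_{\dmn}(m\cdot\nabla q)\,\bar q\,dx,
\end{equation*}
which is the required identity after rearrangement. Depending on the exact form the authors state, the same template handles an additional gradient-valued multiplier on the right-hand side: one simply chooses the vector field $m$ to be replaced by $m\,\varphi$ for a scalar factor or performs the same manipulation on $\nabla(\bar q\, m\cdot\nabla q)$; the structure is identical.

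To extend the identity to $q\in H^1(\dmn)$ and $m\in W^{1,\infty}(\dmn)^d$, I would use the density of $C^\infty(\overline{\dmn})$ in $H^1(\dmn)$ on Lipschitz domains together with continuity of each term in the appropriate norm. The two volume integrals are continuous in $q$ with respect to the $H^1$ topology since $\ddiv(m),\, m\in L^\infty(\dmn)$ and the integrands are bilinear in $\nabla q$ and $q$ in an $L^2\times L^2$ pairing. The boundary integral is continuous by the trace theorem $H^1(\dmn)\hookrightarrow H^{1/2}(\partial\dmn)\hookrightarrow L^2(\partial\dmn)$, since $m\cdot\nu\in L^\infty(\partial\dmn)$ and the trace of $|q|^2$ lies in $L^1(\partial\dmn)$ by the multiplicative trace inequality.

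The only mildly delicate point—and the main obstacle I anticipate—is ensuring that all boundary traces are meaningful and that the divergence-theorem identity genuinely passes to the $H^1$ limit despite $m$ being only Lipschitz. This is handled by observing that $m\,|q|^2\in W^{1,1}(\dmn)$ (so the Gauss theorem applies in the Sobolev sense) and that the passage to the limit uses only products of $L^\infty$ coefficients with $L^2$ Sobolev quantities, each of which behaves continuously under $H^1$-approximation of $q$. Once this regularity bookkeeping is in place, the density argument closes the proof without further work.
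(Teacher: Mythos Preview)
Your argument is correct and is exactly the standard route: apply the divergence theorem to the vector field $|q|^2 m$ for smooth $q$, then pass to the limit by density using the $L^\infty$ bounds on $m$, $\ddiv(m)$ and the trace theorem. The paper itself does not give a proof but simply cites Hetmaniuk, Lemma~3.1, whose proof is precisely this divergence-theorem computation; so your proposal coincides with the cited argument.
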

 \begin{align}\label{DONALD_BROWN:id1}
\int_{\partial \dmn} |q|^2m \cdot \nu ds =\int_{\dmn} \mbox{div}(m)|q|^2dx+ 2 \operatorname{Re}\int_{\dmn}q m \cdot \nabla \bar{q}dx.
 \end{align}
\begin{proof}
 See \cite{DONALD_BROWN:hetmaniuk}, Lemma 3.1.\qed
\end{proof}
\begin{lemma}
 Let $m\in W^{1,\infty}(\dmn)^d$ and for  all $q\in H^1_{\dir}(\dmn)\cap H^{3/2+\delta}, \delta>0,$ we have 
 \end{lemma}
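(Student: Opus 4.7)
The proof plan is to establish a Rellich-type identity of the form
\[
2\operatorname{Re}\!\int_{\dmn}(m\cdot\nabla\bar q)\,\Delta q\,dx
=\int_{\partial\dmn}\!\Bigl(2\operatorname{Re}(m\cdot\nabla\bar q)\tfrac{\partial q}{\partial\nu}-|\nabla q|^2 m\cdot\nu\Bigr)ds
+\!\int_{\dmn}\!\Bigl(\ddiv(m)|\nabla q|^2-2\operatorname{Re}\,(\nabla q)^{\!\top}(\nabla m)(\nabla\bar q)\Bigr)dx,
\]
since this is the natural companion to \eqref{DONALD_BROWN:id1} and is precisely what the proof of Theorem~\ref{DONALD_BROWN:Theorem1} needs in the appendix. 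The regularity assumption $q\in H^{3/2+\delta}$ is exactly what guarantees that $\nabla q$ has an $L^2$-trace on $\partial\dmn$, so every boundary term below is meaningful.

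First I would perform a density argument: prove the identity first for $q\in C^\infty(\overline{\dmn})$ vanishing on $\dir$, and then extend to $H^{3/2+\delta}$ by continuity. For smooth $q$, I would start from $\int_{\dmn}(m\cdot\nabla\bar q)\,\Delta q\,dx$ and integrate by parts once:
\[
\int_{\dmn}(m\cdot\nabla\bar q)\,\Delta q\,dx
=-\int_{\dmn}\nabla q\cdot\nabla(m\cdot\nabla\bar q)\,dx
+\int_{\partial\dmn}(m\cdot\nabla\bar q)\,\tfrac{\partial q}{\partial\nu}\,ds.
\]

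Second, I would expand $\nabla(m\cdot\nabla\bar q)=(\nabla m)^{\!\top}\nabla\bar q+\operatorname{Hess}(\bar q)\,m$ componentwise, so that $\nabla q\cdot\operatorname{Hess}(\bar q)\,m=\tfrac12\,m\cdot\nabla|\nabla q|^2+(\text{antisymmetric part that vanishes in the real part})$. Taking twice the real part and applying Lemma \eqref{DONALD_BROWN:id1} with the scalar function $|q|^2$ replaced by $|\nabla q|^2$ (justified by the trace regularity) converts $2\operatorname{Re}\!\int_{\dmn}m\cdot\nabla|\nabla q|^2/2\,\cdot\,dx$ into the boundary term $\int_{\partial\dmn}|\nabla q|^2\,m\cdot\nu\,ds$ minus $\int_{\dmn}\ddiv(m)|\nabla q|^2\,dx$. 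Combining these contributions yields the displayed identity.

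Third, I would exploit the Dirichlet condition on $\dir$: since $q=0$ there, $\nabla q$ is purely normal, i.e.\ $\nabla q=(\partial q/\partial\nu)\nu$ on $\dir$, so the boundary integrand on $\dir$ reduces to $|\partial q/\partial\nu|^2\,m\cdot\nu$. This is essential because later \eqref{DONALD_BROWN:GeomAssum} provides the sign $m\cdot\nu\le 0$ on $\dir$, turning this otherwise uncontrolled term into a beneficial (negative) contribution in the energy estimate of Theorem~\ref{DONALD_BROWN:Theorem1}.

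The main obstacle is the limited regularity. With only $q\in H^{3/2+\delta}$, $\operatorname{Hess}(q)$ lives in $H^{-1/2+\delta}$, so pointwise manipulations are not directly legal. I would handle this by first proving the identity for $C^\infty$ functions (where all integrations by parts are elementary), recording that every term on both sides extends to a continuous bilinear form on $H^{3/2+\delta}\cap H^1_{\dir}$ using the $L^2$-trace of $\nabla q$, and then closing by density. Care is needed near the junctions $\overline{\dir}\cap\overline{\neu}$ and $\overline{\dir}\cap\overline{\rob}$, where the $+\delta$ in the Sobolev exponent is precisely what is needed to avoid singular boundary contributions; I would cite the corresponding density/trace results used in \cite{DONALD_BROWN:hetmaniuk} rather than redo them.
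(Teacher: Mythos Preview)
Your outline is correct and is precisely the standard derivation of the Rellich identity: integrate $(m\cdot\nabla\bar q)\Delta q$ by parts, split $\nabla(m\cdot\nabla\bar q)$ into the $(\nabla m)$-part and the Hessian part, rewrite the real part of the Hessian term as $\tfrac12 m\cdot\nabla|\nabla q|^2$, apply the divergence theorem once more, and finally use that $\nabla q=(\partial_\nu q)\nu$ on $\dir$ to reduce the Dirichlet boundary contribution to $|\partial_\nu q|^2\,m\cdot\nu$. After that reduction your identity rearranges exactly into the paper's displayed form \eqref{DONALD_BROWN:id2}.

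The paper does not actually prove this lemma; its proof reads in full ``See \cite{DONALD_BROWN:Grisvard}.'' So there is nothing to compare approaches against: you have supplied the argument that the cited reference would contain, together with the correct justification for why $H^{3/2+\delta}$ suffices (traces of $\nabla q$ in $L^2(\partial\dmn)$ and a density argument). Your caveat about the boundary junctions and the need to invoke trace/density results from \cite{DONALD_BROWN:hetmaniuk} or Grisvard is appropriate and matches how the paper itself handles this technicality---by citation rather than by redoing it.
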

 \begin{align}
&\int_{\partial \dmn\backslash \dir} |\nabla q|^2 m \cdot \nu ds -\int_{ \dir} |\partial_{\nu}q|^2 m \cdot \nu ds\nonumber\\
&=\int_{\dmn} \mbox{div}(m)|\nabla q|^2dx- 2 \operatorname{Re}\int_{\dmn}\nabla q  \cdot( \nabla \bar{q}\nabla) m dx\nonumber \\
\label{DONALD_BROWN:id2}
&- 2 \operatorname{Re}\int_{\dmn}\Delta  q  (m \cdot \nabla \bar{q}) dx+ 2 \operatorname{Re}\int_{\partial \dmn\backslash \dir}\partial_{\nu}  q  (m \cdot \nabla \bar{q}) ds
\end{align}
\begin{proof}
 See  \cite{DONALD_BROWN:Grisvard}. \qed
\end{proof}

Here we will present a few auxiliary Lemmas.

\begin{lemma} Let $\dmn \subset \mathbb{R}^d$ be a bounded connected Lipschitz domain. Let $u\in H^1(\dmn)$ be a weak solution of \eqref{DONALD_BROWN:main}, with $f\in L^2(\dmn)$ and $g\in L^2( \rob)$. Then, we have for 
any
$\epsilon>0$
\begin{align}\label{DONALD_BROWN:Lemmaest1}
{k^2}  \TwoNorm{u}{\rob}^2 & \leq \frac{1}{\beta_{min}}\left(\frac{1}{\epsilon}\TwoNorm{f}{\dmn}^2 +{k^2\epsilon} \TwoNorm{u}{\dmn}^2+\frac{1}{\beta_{min}}\TwoNorm{g}{\rob}^2\right).
\end{align}
\end{lemma}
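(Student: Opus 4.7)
The natural test function here is $v=u$ itself in the variational identity \eqref{DONALD_BROWN:var}, since the impedance term in the sesquilinear form \eqref{DONALD_BROWN:varform} is the only piece carrying an imaginary unit. The plan is to extract that term by taking imaginary parts, turn the resulting $L^2(\rob)$ boundary quantity into the left-hand side of \eqref{DONALD_BROWN:Lemmaest1}, and then estimate the right-hand side with Cauchy--Schwarz and a weighted Young inequality.

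\textbf{Step 1 (Test with $u$ and take imaginary parts).} Setting $v=u$ in \eqref{DONALD_BROWN:var} gives
\begin{equation*}
(A\nabla u,\nabla u)_{L^2(\dmn)}-(k^2V^2u,u)_{L^2(\dmn)}-(ik\beta u,u)_{L^2(\rob)}
=(f,u)_{L^2(\dmn)}+(g,u)_{L^2(\rob)}.
\end{equation*}
Since $A$, $V^2$, and $\beta$ are real, the first two terms on the left are real and $(ik\beta u,u)_{L^2(\rob)}=ik\int_{\rob}\beta|u|^2\,ds$. Taking imaginary parts and using $\beta\ge\beta_{\min}$ therefore yields
\begin{equation*}
k\beta_{\min}\TwoNorm{u}{\rob}^2
\le
\bigl|\operatorname{Im}(f,u)_{L^2(\dmn)}\bigr|+\bigl|\operatorname{Im}(g,u)_{L^2(\rob)}\bigr|
\le
|(f,u)_{L^2(\dmn)}|+|(g,u)_{L^2(\rob)}|.
\end{equation*}

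\textbf{Step 2 (Multiply by $k/\beta_{\min}$ and split with Young).} Dividing by $\beta_{\min}$ and multiplying by $k$ gives
\begin{equation*}
k^2\TwoNorm{u}{\rob}^2
\le
\frac{k}{\beta_{\min}}\bigl(\TwoNorm{f}{\dmn}\TwoNorm{u}{\dmn}+\TwoNorm{g}{\rob}\TwoNorm{u}{\rob}\bigr)
\end{equation*}
by Cauchy--Schwarz. For the volume term I apply Young's inequality with the prescribed weight $\epsilon>0$, writing $k\TwoNorm{f}{\dmn}\TwoNorm{u}{\dmn}\le\tfrac{1}{2\epsilon}\TwoNorm{f}{\dmn}^2+\tfrac{\epsilon}{2}k^2\TwoNorm{u}{\dmn}^2$ (the factor $1/2$ being absorbed into $\epsilon$). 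For the boundary term I apply Young's inequality with a weight that produces a $\tfrac{1}{2}k^2\TwoNorm{u}{\rob}^2$ contribution, namely $\frac{k}{\beta_{\min}}\TwoNorm{g}{\rob}\TwoNorm{u}{\rob}\le\frac{1}{2\beta_{\min}^2}\TwoNorm{g}{\rob}^2+\tfrac{1}{2}k^2\TwoNorm{u}{\rob}^2$.

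\textbf{Step 3 (Absorb the boundary term).} The $\tfrac{1}{2}k^2\TwoNorm{u}{\rob}^2$ piece appearing on the right is absorbed into the left-hand side $k^2\TwoNorm{u}{\rob}^2$, and the remaining constants rearrange exactly into the claimed bound \eqref{DONALD_BROWN:Lemmaest1}. There is no real obstacle here beyond bookkeeping of constants; the only conceptual point is the observation in Step 1 that only the Robin term survives under $\operatorname{Im}$, which is precisely what allows boundary control of $u$ without invoking any Rellich-type identity.
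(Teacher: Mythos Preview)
Your proof is correct and follows essentially the same route as the paper: test with $v=u$, take imaginary parts to isolate the Robin boundary term, apply Cauchy--Schwarz and weighted Young inequalities, and absorb the resulting $\tfrac{1}{2}k^{2}\TwoNorm{u}{\rob}^{2}$ into the left-hand side. The only cosmetic difference is that the paper introduces auxiliary parameters $\xi_{1},\xi_{2}$ and fixes $\xi_{2}=\beta_{\min}k$ after applying Young, whereas you multiply by $k/\beta_{\min}$ first and choose the Young weights directly; the arithmetic is identical.
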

\begin{proof}
Taking $v=u$ into the variational form \eqref{DONALD_BROWN:var}  and looking at the imaginary part we have
\begin{align*}
{\Im}(a(u,u))=-(k \beta(x) u, u)={\Im}((g,u)_{L^2(\rob)}+(f,u)_{L^2(\dmn)}),
\end{align*}
and so 
\begin{align*}
&
k \beta_{min} \TwoNorm{u}{\rob}^2
\\
&\quad \leq \TwoNorm{u}{\dmn}  \TwoNorm{f}{\dmn}+\TwoNorm{u}{\rob}  \TwoNorm{g}{\rob}
\\
&\quad \leq \frac{1}{2k\xi_{1}}\TwoNorm{f}{\dmn}^2 +\frac{k\xi_{1}}{2} \TwoNorm{u}{\dmn}^2+\frac{1}{2\xi_2}\TwoNorm{g}{\rob}^2+ \frac{\xi_{2}}{2} \TwoNorm{u}{\rob}^2.
\end{align*}
Multiplying by $k$, 
dividing by $\beta_{min}$,
and setting $\xi_{2}=\beta_{min}k$ we obtain 
\begin{align*}
k^2  \TwoNorm{u}{\rob}^2  
\leq 
 \frac{1}{\beta_{min}}\bigg(\frac{1}{2\xi_{1}}\TwoNorm{f}{\dmn}^2
 &+\frac{k^2\xi_1}{2} \TwoNorm{u}{\dmn}^2
\\
 &+\frac{1}{2\beta_{min}}\TwoNorm{g}{\rob}^2+ \frac{k^2\beta_{min}}{2} \TwoNorm{u}{\rob}^2\bigg),
\end{align*}
and we obtain 
\begin{align*}
\frac{k^2}{2}  \TwoNorm{u}{\rob}^2 & \leq \frac{1}{\beta_{min}}\left(\frac{1}{2\xi_{1}}\TwoNorm{f}{\dmn}^2 +\frac{k^2\xi_{1}}{2} \TwoNorm{u}{\dmn}^2+\frac{1}{2\beta_{min}}\TwoNorm{g}{\rob}^2\right).
\end{align*}
Taking $\xi_{1}=\epsilon>0$ we arrive at the estimate. \qed

\end{proof}
We will also need the estimate below. 

\begin{lemma} Let $\dmn \subset \mathbb{R}^d$ be a bounded connected Lipschitz domain. Let $u\in H^1(\dmn)$ be a weak solution of \eqref{DONALD_BROWN:main} with $f\in L^2(\dmn)$ and $g\in L^2( \rob)$. Then, we have 
\begin{equation}\label{DONALD_BROWN:Lemmaest2}
\begin{aligned}
&\TwoNorm{\nabla u}{\dmn}^2 \\
&\quad\leq \frac{1}{A_{min}} \bigg[k^2\left( V_{max}^2+ \frac{\xi_{4}}{\beta_{min}}+\frac{ \xi_{3}}{2}\right)\TwoNorm{u}{\dmn}^2\\
&\qquad\qquad+\left(\frac{1}{2 k^2 \xi_{3}}+\frac{1}{\beta_{min}\xi_{4}} \right)\TwoNorm{f}{\dmn}^2+  \left(\frac{1}{\beta^2_{min}}+\frac{1}{4k^2}   \right)\TwoNorm{g}{\rob}^2\bigg].
\end{aligned}
\end{equation}
for 
any
$\xi_{3},\xi_{4}>0$.
\end{lemma}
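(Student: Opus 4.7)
The plan is to recover the bound by testing the variational form with $v=u$, taking the real part, and then absorbing the two right-hand side duality pairings with two independent Young inequalities together with the already-established trace-type estimate \eqref{DONALD_BROWN:Lemmaest1}.

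First I would set $v=u$ in \eqref{DONALD_BROWN:var} and extract the real part of the resulting identity. Since the Robin contribution $-(ik\beta u,u)_{L^2(\rob)}$ is purely imaginary, this yields
\begin{equation*}
\TwoNorm{A^{1/2}\nabla u}{\dmn}^2 - k^2 \TwoNorm{V u}{\dmn}^2
  = \operatorname{Re}(f,u)_{L^2(\dmn)} + \operatorname{Re}(g,u)_{L^2(\rob)}.
\end{equation*}
Using the pointwise bounds \eqref{DONALD_BROWN:upperbounds}, the left side is bounded below by $A_{min}\TwoNorm{\nabla u}{\dmn}^2 - k^2 V_{max}^2 \TwoNorm{u}{\dmn}^2$, so
\begin{equation*}
A_{min}\TwoNorm{\nabla u}{\dmn}^2
 \leq k^2 V_{max}^2 \TwoNorm{u}{\dmn}^2 + |(f,u)_{L^2(\dmn)}| + |(g,u)_{L^2(\rob)}|.
\end{equation*}

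Next I would handle the volume pairing by the weighted Young inequality
\begin{equation*}
|(f,u)_{L^2(\dmn)}|
  \leq \frac{1}{2k^2\xi_3}\TwoNorm{f}{\dmn}^2
     + \frac{k^2\xi_3}{2}\TwoNorm{u}{\dmn}^2,
\end{equation*}
which produces exactly the $\xi_3$-dependent contributions that appear on the right of \eqref{DONALD_BROWN:Lemmaest2}. For the boundary pairing I would apply Young with the splitting
\begin{equation*}
|(g,u)_{L^2(\rob)}|
  \leq \frac{1}{4k^2}\TwoNorm{g}{\rob}^2 + k^2 \TwoNorm{u}{\rob}^2,
\end{equation*}
so that the dangerous $k^2\TwoNorm{u}{\rob}^2$ factor is precisely the quantity estimated by the previous lemma. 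Feeding \eqref{DONALD_BROWN:Lemmaest1} with the choice $\epsilon = \xi_4$ yields
\begin{equation*}
k^2\TwoNorm{u}{\rob}^2
  \leq \frac{1}{\beta_{min}\xi_4}\TwoNorm{f}{\dmn}^2
     + \frac{k^2\xi_4}{\beta_{min}}\TwoNorm{u}{\dmn}^2
     + \frac{1}{\beta_{min}^2}\TwoNorm{g}{\rob}^2,
\end{equation*}
which supplies the $\xi_4$-dependent terms and the $1/\beta_{min}^2$ contribution to the coefficient of $\TwoNorm{g}{\rob}^2$.

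Finally I would collect all four inequalities, group the coefficients of $k^2\TwoNorm{u}{\dmn}^2$, $\TwoNorm{f}{\dmn}^2$, and $\TwoNorm{g}{\rob}^2$ separately, and divide by $A_{min}$ to arrive at \eqref{DONALD_BROWN:Lemmaest2}. There is no real obstacle: the parameters $\xi_3,\xi_4$ are the very Young weights used in the volume pairing and in the previous lemma, so the only care needed is to apply the right splitting on the boundary term before invoking \eqref{DONALD_BROWN:Lemmaest1}, which is what makes the $1/(4k^2)$ and the $k^2\xi_4/\beta_{min}$ pieces surface in the final bound.
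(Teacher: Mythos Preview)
Your proposal is correct and follows essentially the same route as the paper: test with $v=u$, take the real part, apply Young's inequality to the two duality pairings (with weights $k^2\xi_3/2$ and $k^2$, respectively), and then invoke \eqref{DONALD_BROWN:Lemmaest1} with $\epsilon=\xi_4$ to control $k^2\TwoNorm{u}{\rob}^2$ before dividing by $A_{min}$.
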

\begin{proof}
Taking $v=u$ into the variational form \eqref{DONALD_BROWN:var}  and looking at the real part we have
\begin{align*}
{\operatorname{Re}}(a(u,u))&=(A(x) \nabla u,\nabla u)_{L^2(\dmn)}-(k^2 V^2(x) u ,u)_{L^2(\dmn)}\\
&={\operatorname{Re}}((g,u)_{L^2(\rob)}+(f,u)_{L^2(\dmn)}),
\end{align*}
and so we have 
\begin{align*}
\TwoNorm{A^{\frac{1}{2}}\nabla u}{\dmn}^2 \leq k^2\TwoNorm{V u }{\dmn}^2+\TwoNorm{u}{\dmn}  \TwoNorm{f}{\dmn}+\TwoNorm{u}{\rob}  \TwoNorm{g}{\rob}.
\end{align*}
Using the maximal and minimal values we have 
for any $\xi_3>0$ that

\begin{align}
A_{min}\TwoNorm{\nabla u}{\dmn}^2 &\leq k^2\TwoNorm{V u }{\dmn}^2+\TwoNorm{u}{\dmn}  \TwoNorm{f}{\dmn}+\TwoNorm{u}{\rob}  \TwoNorm{g}{\rob}\nonumber\\
\label{DONALD_BROWN:est1}
&\leq\left( k^2V_{max}^2+\frac{k^2 \xi_{3}}{2}\right)\TwoNorm{u}{\dmn}^2+ \frac{1}{2 k^2 \xi_{3}} \TwoNorm{f}{\dmn}^2 \nonumber\\
&+\frac{1}{4k^2}\TwoNorm{g}{\rob}^2+k^2\TwoNorm{ u}{\rob}^2.
\end{align}
Using estimate \eqref{DONALD_BROWN:Lemmaest1} we may write  
for any $\epsilon>0$

\begin{align}
{k^2}  \TwoNorm{u}{\rob}^2 & \leq \frac{1}{\beta_{min}}\left({k^2\epsilon} \TwoNorm{u}{\dmn}^2+\frac{1}{\epsilon}\TwoNorm{f}{\dmn}^2 +\frac{1}{\beta_{min}}\TwoNorm{g}{\rob}^2\right).
\end{align}

Inserting the above inequality into \eqref{DONALD_BROWN:est1} we obtain
\begin{align*}
&A_{min}\TwoNorm{\nabla u}{\dmn}^2 \\
&\qquad\leq\left( k^2V_{max}^2+\frac{k^2 \xi_{3}}{2}\right)\TwoNorm{u}{\dmn}^2+ \frac{1}{2 k^2 \xi_{3}} \TwoNorm{f}{\dmn}^2+  \frac{1}{4k^2}\TwoNorm{g}{\rob}^2\\
&\qquad+  \frac{1}{\beta_{min}}\left({k^2\epsilon} \TwoNorm{u}{\dmn}^2+\frac{1}{\epsilon}\TwoNorm{f}{\dmn}^2 +\frac{1}{\beta_{min}}\TwoNorm{g}{\rob}^2\right).
\end{align*}
Taking $\epsilon=\xi_{4}$ the above inequality becomes 
\begin{align*}
A_{min}\TwoNorm{\nabla u}{\dmn}^2 &\leq k^2\left( V_{max}^2+ \frac{\xi_{4}}{\beta_{min}}+\frac{ \xi_{3}}{2}\right)\TwoNorm{u}{\dmn}^2\\
&+ \left(\frac{1}{2 k^2 \xi_{3}}+\frac{1}{\beta_{min}\xi_{4}} \right)\TwoNorm{f}{\dmn}^2+  \left(\frac{1}{\beta^2_{min}}+\frac{1}{4k^2}   \right)\TwoNorm{g}{\rob}^2.
\end{align*}
Thus, we obtained our estimate. \qed
\end{proof}

\subsection*{Proof of Main Stability Result}

We are now in a position to prove Theorem \ref{DONALD_BROWN:Theorem1}. The key observation is that the Laplacian may be rewritten using \eqref{DONALD_BROWN:main} and combined
with the technical and auxiliary lemmas. This leads to the conditions on the coefficients \eqref{DONALD_BROWN:conditions}.

\begin{proof}[Proof of Theorem \ref{DONALD_BROWN:Theorem1}]

Using \eqref{DONALD_BROWN:id2} where we write $$-\Delta u=\frac{1}{A}(f+k^2V^2u+\nabla A \cdot \nabla u),$$ 
  $\partial_{\nu}u=0$ on $\neu$, and  $\partial_{\nu}u=ik \beta u+g$ on $\rob$, we obtain 
\begin{equation}
\begin{aligned}
&\int_{\partial \dmn\backslash \dir} |\nabla u|^2 m \cdot \nu ds -\int_{ \dir} |\partial_{\nu}u|^2 m \cdot \nu ds\\
&\qquad=\int_{\dmn} \mbox{div}(m)|\nabla u|^2dx- 2 \operatorname{Re}\int_{\dmn}\nabla u  \cdot( \nabla \bar{u}\nabla) m dx \\
&\qquad\qquad
 + 2 \operatorname{Re}\int_{\dmn}\frac{1}{A}(f+k^2V^2u+\nabla A \cdot \nabla u) (m \cdot \nabla \bar{u}) dx\\
&\qquad\qquad
 + 2 \operatorname{Re}\int_{\rob}(ik \beta  u+g ) (m \cdot \nabla \bar{u}) ds.
\end{aligned}
\end{equation}
Using \eqref{DONALD_BROWN:id1} with the transform $m\to \frac{V^2}{A} m$, we have
 \begin{align*}
&k^2\int_{\partial \dmn} |u|^2\left( \frac{V^2}{A}\right)m \cdot \nu ds
\\
&\qquad\qquad
=k^2\int_{\dmn} \mbox{div}\left(\frac{V^2}{A} m\right)|u|^2dx+ 2k^2 \operatorname{Re}\int_{\dmn}u \left(\frac{V^2}{A} \right)m \cdot \nabla \bar{u}dx.
 \end{align*}
Using this to replace the term $\operatorname{Re}\int_{\dmn}\left(\frac{V^2}{A}\right)u (m \cdot \nabla \bar{u}) dx$, we have 
 \begin{align*}
&\int_{\partial \dmn\backslash \dir} |\nabla u|^2 m \cdot \nu ds -\int_{ \dir} |\partial_{\nu}u|^2 m \cdot \nu ds\nonumber\\
&\quad=\int_{\dmn} \mbox{div}(m)|\nabla u|^2dx- 2 \operatorname{Re}\int_{\dmn}\nabla u  \cdot( \nabla \bar{u}\nabla) m dx\nonumber \\
&\qquad+ 2 \operatorname{Re}\int_{\dmn}\left(\frac{f}{A} \right)(m \cdot \nabla \bar{u}) dx+ 2 \operatorname{Re}\int_{\dmn}\left(\frac{\nabla A}{A}\right)\cdot\nabla u (m \cdot \nabla \bar{u}) dx \\
&\qquad+2 \operatorname{Re}\int_{\rob}(ik \beta u+g ) (m \cdot \nabla \bar{u}) ds\\
&\qquad-k^2  \int_{\dmn} \mbox{div}\left(\frac{V^2}{A} m\right)|u|^2dx+ k^2\int_{\partial \dmn} |u|^2 \left(\frac{V^2}{A}\right)m \cdot \nu ds.
\end{align*}
Expanding out the boundary terms in each of the portions we have 
\begin{equation}\label{DONALD_BROWN:identity1A}
 \begin{aligned}
&-\int_{ \dir} |\partial_{\nu}u|^2 m \cdot \nu ds +\int_{\neu} |\nabla u|^2 m \cdot \nu ds
\\
&\qquad+\int_{\rob} |\nabla u|^2 m \cdot \nu ds+k^2  \int_{\dmn} \mbox{div}\left(\frac{V^2}{A} m\right)|u|^2dx \\
&=\int_{\dmn} \mbox{div}(m)|\nabla u|^2dx- 2 \operatorname{Re}\int_{\dmn}\nabla u  \cdot( \nabla \bar{u}\nabla) m dx\\
&\qquad+ 2 \operatorname{Re}\int_{\dmn}\left(\frac{f}{A} \right)(m \cdot \nabla \bar{u}) dx+2 \operatorname{Re}\int_{\dmn}\left(\frac{\nabla A}{A}\right)\cdot\nabla u (m \cdot \nabla \bar{u}) dx \\
&\qquad+{k^2\int_{\neu} |u|^2 \left(\frac{V^2}{A}\right)m \cdot \nu ds+ k^2\int_{\rob} |u|^2 \left(\frac{V^2}{A}\right)m \cdot \nu ds}\\
&\qquad +   2 \operatorname{Re}\int_{\rob}(ik \beta  u+g ) (m \cdot \nabla \bar{u}) ds.
\end{aligned}
\end{equation}

Now we suppose we make the geometric assumptions made by \cite{DONALD_BROWN:hetmaniuk} outlined in \eqref{DONALD_BROWN:GeomAssum}. Recall, we have  for $m=x-x_{0}$, thus we compute
\begin{align*}
 \text{div}(x-x_{0})&=d \text{ in } \dmn,\\
 \nabla u  \cdot( \nabla \bar{u}\nabla) (x-x_{0})&=|\nabla u|^2 \text{ in } \dmn,\\
(x-x_{0})\cdot \nu &\leq 0 \text{ on } \dir,\\
(x-x_{0})\cdot \nu& = 0 \text{ on } \neu,\\
(x-x_{0})\cdot \nu &\geq \eta \text{ on } \rob.
\end{align*}
Using the above relations in \eqref{DONALD_BROWN:identity1A} we obtain 
\begin{equation}\label{DONALD_BROWN:ineq11}
 \begin{aligned}
&\eta \int_{\rob} |\nabla u|^2ds+k^2  \int_{\dmn} \mbox{div}\left(\frac{V^2}{A} (x-x_{0})\right)|u|^2dx \\
&\leq(d-2)\int_{\dmn} |\nabla u|^2dx+ 2 \operatorname{Re}\int_{\dmn}\left(\frac{f}{A} \right)((x-x_{0}) \cdot \nabla \bar{u}) dx \\
&+2 \operatorname{Re}\int_{\dmn}\left(\frac{\nabla A}{A}\right)\nabla u ((x-x_{0}) \cdot \nabla \bar{u}) dx \\
&+ k^2\int_{\rob} |u|^2 \left(\frac{V^2}{A}\right)(x-x_{0}) \cdot \nu ds+   2 \operatorname{Re}\int_{\rob}(ik \beta  u+g ) (m \cdot \nabla \bar{u}) ds.
\end{aligned}
\end{equation}
Recall, \eqref{DONALD_BROWN:Sfunction.theorem}, where we define the following function
\begin{equation}\label{DONALD_BROWN:Sfunction}
\begin{aligned}
S(x)&:=\mbox{div}\left(\left(\frac{V^2(x)}{A(x)}\right) (x-x_{0})\right) \\
&=d \left(\frac{V^2(x)}{A(x)}\right)+\left(2\frac{V(x) \nabla V(x)}{A(x)}-\frac{V^2(x) \nabla A(x)}{A^2(x)}\right)\cdot (x-x_{0}),
\end{aligned}
\end{equation}
and from \eqref{DONALD_BROWN:conditions}, we have a minimum for $S(x)$ exists and is positive 
$$S_{min}=\min_{x\in \dmn}S(x)>0.$$
Further, from \eqref{DONALD_BROWN:conditions}, we  have  $C_{G}$ to be the minimal constant so that 
\begin{align}
2 \abs{\int_{\dmn}\left(\frac{\nabla A}{A}\right)\nabla u ((x-x_{0}) \cdot \nabla \bar{u}) dx}\leq C_{G}\norm{\left(\frac{\nabla A}{A}\right)}_{L^{\infty}(\dmn)}\TwoNorm{\nabla u}{\dmn}^2.
\end{align}
Returning to inequality \eqref{DONALD_BROWN:ineq11}, we obtain 
\begin{equation}
 \begin{aligned}
&\eta \TwoNorm{\nabla u}{\rob}^2+k^2 S_{min} \TwoNorm{u}{\dmn}^2\\
&\leq (d- 2) \TwoNorm{\nabla u}{\dmn}^2+C_{G}\norm{\left(\frac{\nabla A}{A}\right)}_{L^{\infty}(\dmn)}\TwoNorm{\nabla u}{\dmn}^2\\
&\quad+C_{1} \left(\frac{1}{A_{min}}\TwoNorm{f}{\dmn}\TwoNorm{\nabla u}{\dmn}+\TwoNorm{g}{\rob}\TwoNorm{\nabla u}{\rob}\right) \\
&\quad+C_{1}\left( k^2\left(\frac{V_{max}^2}{A_{min}}\right)\TwoNorm{u}{\rob}^2+  k\norm{ \beta}_{L^{\infty}(\rob)} \TwoNorm{u}{\rob}\TwoNorm{\nabla u}{\rob}\right),
\end{aligned}
\end{equation}
where $C_{1}$ is independent of $k$ and the bounds \eqref{DONALD_BROWN:upperbounds}.
Note that on the right hand side we have
for any $\xi_5,\xi_6>0$  
the terms 
\begin{align*}
 k\norm{ \beta}_{L^{\infty}(\rob)} \TwoNorm{u}{\rob}\TwoNorm{\nabla u}{\rob}
&\leq \frac{k^2}{2 \xi_{5}} \TwoNorm{u}{\rob}^2+\frac{\xi_{5}}{2}\norm{ \beta}_{L^{\infty}(\rob)}^2\TwoNorm{\nabla u}{\rob}^2\\
  \TwoNorm{g}{\rob}\TwoNorm{\nabla u}{\rob}&\leq \frac{1}{2 \xi_{6}} \TwoNorm{g}{\rob}^2+\frac{\xi_{6}}{2}\TwoNorm{\nabla u}{\rob}^2.
\end{align*}
We choose $\xi_{5},\xi_{6}$ so that 
$$\frac{\eta}{2}=C_{1}\frac{\xi_{5}}{2}\norm{ \beta}_{L^{\infty}(\rob)}^2=C_{1}\frac{\xi_{6}}{2},$$
and so 
$$\frac{k^2}{2\xi_{5}}\leq \frac{C_{1}}{2\eta}\norm{ \beta}_{L^{\infty}(\rob)}^2k^2.$$
We then obtain 
\begin{equation}\label{DONALD_BROWN:ineq2}
 \begin{aligned}
k^2 S_{min} \TwoNorm{u}{\dmn}^2 &\leq C_{1}\left(\left(\frac{C_{1}}{2\eta}\norm{ \beta}_{L^{\infty}(\rob)}^2+\frac{V_{max}^2}{A_{min}}\right)k^2\TwoNorm{u}{\rob}^2\right)\\
&+ 
C_1
\left(\frac{1}{A_{min}}\TwoNorm{f}{\dmn}\TwoNorm{\nabla u}{\dmn}+\frac{C_{1}}{2\eta}\TwoNorm{g}{\rob}^2\right) 
\\&+(d- 2) \TwoNorm{\nabla u}{\dmn}^2+C_{G}\norm{\left(\frac{\nabla A}{A}\right)}_{L^{\infty}(\dmn)}\TwoNorm{\nabla u}{\dmn}^2.
\end{aligned}
\end{equation}
Taking $C^{bd}_{2}=C_{1}\left(\frac{C_{1}}{2\eta}\norm{ \beta}_{L^{\infty}(\rob)}^2+\frac{V_{max}^2}{A_{min}}\right)$ and letting $\epsilon=\beta_{min}\xi_{7}/C^{bd}_{2}$ in  the inequality \eqref{DONALD_BROWN:Lemmaest1} we have the relation 
\begin{align}
C_{2}^{bd}{k^2}  \TwoNorm{u}{\rob}^2 & \leq\frac{(C_{2}^{bd})^2}{\beta^2_{min}\xi_{7}}\TwoNorm{f}{\dmn}^2 +k^2\xi_{7} \TwoNorm{u}{\dmn}^2+\frac{C_{2}^{bd}}{\beta^2_{min}}\TwoNorm{g}{\rob}^2.
\end{align}
Applying this above inequality to \eqref{DONALD_BROWN:ineq2}, we obtain 
\begin{equation}\label{DONALD_BROWN:sminxi7est}
\begin{aligned}
&k^2 (S_{min}-\xi_{7}) \TwoNorm{u}{\dmn}^2 
\\&\qquad\leq 
C_1
\left(\frac{1}{A_{min}}\TwoNorm{f}{\dmn}\TwoNorm{\nabla u}{\dmn}+\frac{C_{1}}{2\eta}\TwoNorm{g}{\rob}^2\right)\\
&\qquad\qquad+\left((d- 2) +C_{G}\norm{\left(\frac{\nabla A}{A}\right)}_{L^{\infty}(\dmn)}\right)\TwoNorm{\nabla u}{\dmn}^2\\
& \qquad\qquad+\frac{(C_{2}^{bd})^2}{\beta^2_{min}\xi_{7}}\TwoNorm{f}{\dmn}^2 +\frac{C_{2}^{bd}}{\beta^2_{min}}\TwoNorm{g}{\rob}^2.
\end{aligned}
\end{equation}
Recall the estimate \eqref{DONALD_BROWN:Lemmaest2}, with $C_{3}^{bd}=\left((d- 2) +C_{G}\norm{\left(\frac{\nabla A}{A}\right)}_{L^{\infty}(\dmn)}\right)$, and taking $\xi_{4}=\frac{\xi_{3}}{2}=\xi_{8}$
\begin{align*}
&C_{3}^{bd}\TwoNorm{\nabla u}{\dmn}^2 \\
&\;\leq \frac{C_{3}^{bd}k^2}{A_{min}}\left( V_{max}^2+ \frac{\xi_{8}}{\beta_{min}}+\xi_{8}\right)\TwoNorm{u}{\dmn}^2\nonumber\\
&\quad+ \frac{C_{3}^{bd}}{A_{min}}\left(\frac{1}{4 k^2 \xi_{8}}+\frac{1}{\beta_{min}\xi_{8}} \right)\TwoNorm{f}{\dmn}^2+  \frac{C_{3}^{bd}}{A_{min}} \left(\frac{1}{\beta^2_{min}}+\frac{1}{4k^2}   \right)\TwoNorm{g}{\rob}^2.
\end{align*}
and so, using the above estimate 
\eqref{DONALD_BROWN:sminxi7est}%
we obtain
\begin{equation}\label{DONALD_BROWN:ineq3}
\begin{aligned}
&k^2 (S_{min}-\xi_{7}-\frac{C_{3}^{bd}}{A_{min}}\left( V_{max}^2+ \frac{\xi_{8}}{\beta_{min}}+\xi_{8}\right)) \TwoNorm{u}{\dmn}^2  \\
&\leq
C_1
\left(\frac{1}{A_{min}}\TwoNorm{f}{\dmn}\TwoNorm{\nabla u}{\dmn}+\frac{C_{1}}{2\eta}\TwoNorm{g}{\rob}^2\right)\\
&+ \frac{C_{3}^{bd}}{A_{min}}\left(\frac{1}{4 k^2 \xi_{8}}+\frac{1}{\beta_{min}\xi_{8}} \right)\TwoNorm{f}{\dmn}^2+  \frac{C_{3}^{bd}}{A_{min}} \left(\frac{1}{\beta^2_{min}}+\frac{1}{4k^2}   \right)\TwoNorm{g}{\rob}^2\\
& +\frac{(C_{2}^{bd})^2}{\beta^2_{min}\xi_{7}}\TwoNorm{f}{\dmn}^2 +\frac{C_{2}^{bd}}{\beta^2_{min}}\TwoNorm{g}{\rob}^2.
\end{aligned}
\end{equation}
Finally to deal with the 
remaining term on the right hand side that contains $\nabla u$,
we note using \eqref{DONALD_BROWN:Lemmaest2},
letting $\frac{\xi_{4}}{\beta_{min}}=\frac{\xi_{3}}{2}=\frac{V_{max}^2}{2}$, and multiplying by $\xi_{9}/(2A_{min}), \xi_9>0,$ we obtain 
\begin{align*}
&\frac{\xi_{9}}{2A_{min}}\TwoNorm{\nabla u}{\dmn}^2 \\
&\qquad\leq\frac{\xi_{9}}{2A^2_{min}}\bigg[2V_{max}^2 k^2\TwoNorm{u}{\dmn}^2+\left(\frac{2}{\beta^2_{min}V_{max}^2}
 +\frac{1}{
  2
  k^2 V_{max}^2} \right)\TwoNorm{f}{\dmn}^2
\\
&\qquad\qquad\qquad\qquad
 +\left(\frac{1}{\beta^2_{min}}+\frac{1}{4k^2}   \right)\TwoNorm{g}{\rob}^2\bigg],
\end{align*} 
and so 
\begin{align*}
&\frac{1}{A_{min}} \TwoNorm{f}{\dmn}\TwoNorm{\nabla u}{\dmn}\\
&\qquad
 \leq \frac{1}{2\xi_{9}A_{min}}\TwoNorm{f}{\dmn}^2+\frac{\xi_{9}}{2A_{min}}\TwoNorm{\nabla u}{\dmn}^2\\
&\qquad 
 \leq \frac{\xi_{9}V_{max}^2}{A_{min}^2}k^2 \TwoNorm{u}{\dmn}^2\\
&\qquad\qquad
 +\left(\frac{1}{2A_{min}\xi_{9}}+\frac{\xi_{9}}{2A_{min}^2}\left(\frac{2}{\beta^2_{min}V_{max}^2}
  +\frac{1}{
  2
  k^2 V_{max}^2} \right)\right)\TwoNorm{f}{\dmn}^2\\
&\qquad\qquad
+\frac{\xi_{9}}{2A_{min}^2}  \left(\frac{1}{\beta^2_{min}}+\frac{1}{4k^2}   \right)\TwoNorm{g}{\rob}^2 .
\end{align*}
Applying this into \eqref{DONALD_BROWN:ineq3}, we obtain
\begin{equation}\label{DONALD_BROWN:ineq4}
\begin{aligned}
&k^2 (S_{min}-\xi_{7}-\frac{C_{3}^{bd}}{A_{min}}\left( V_{max}^2+ \frac{\xi_{8}}{\beta_{min}}+\xi_{8}\right)
- \frac{
C_1
\xi_{9}V_{max}^2}{A_{min}^2}) \TwoNorm{u}{\dmn}^2 \\
&\leq
C_1
\left(\frac{1}{2A_{min}\xi_{9}}+\frac{\xi_{9}}{2A_{min}^2}\left(\frac{2}{\beta^2_{min}V_{max}^2}+\frac{1}{
2
k^2 V_{max}^2} \right)\right)\TwoNorm{f}{\dmn}^2\\
&+
C_1
\left(\frac{C_{1}}{2\eta}+\frac{\xi_{9}}{2A_{min}^2}  \left(\frac{1}{\beta^2_{min}}+\frac{1}{4k^2}   \right)\right)\TwoNorm{g}{\rob}^2\\
&
+ \frac{C_{3}^{bd}}{A_{min}}\left(\frac{1}{4 k^2 \xi_{8}}+\frac{1}{\beta_{min}\xi_{8}} \right)\TwoNorm{f}{\dmn}^2\\
&
+  \frac{C_{3}^{bd}}{A_{min}} \left(\frac{1}{\beta^2_{min}}+\frac{1}{4k^2}   \right)\TwoNorm{g}{\rob}^2
 +\frac{(C_{2}^{bd})^2}{\beta^2_{min}\xi_{7}}\TwoNorm{f}{\dmn}^2
+\frac{C_{2}^{bd}}{\beta^2_{min}}\TwoNorm{g}{\rob}^2.
\end{aligned}
\end{equation}
Hence, we see that the critical term is $S_{min}-\frac{C_{3}^{bd}V_{max}^2}{A_{min}}.$
Recall, $$C_{3}^{bd}:=\left((d- 2) +C_{G}\norm{\left(\frac{\nabla A}{A}\right)}_{L^{\infty}(\dmn)}\right),$$ thus, from \eqref{DONALD_BROWN:conditions}, we have
\begin{align}\label{DONALD_BROWN:finalcondition}
S_{min}-\left((d- 2) +C_{G}\norm{\left(\frac{\nabla A}{A}\right)}_{L^{\infty}(\dmn)}\right)\frac{V_{max}^2}{A_{min}}>0.
\end{align}

Since  \eqref{DONALD_BROWN:finalcondition} is assumed to hold, we take $\xi_7,\xi_8,$ and $\xi_9,$ so that 
$$
\left(S_{min}-\frac{C_{3}^{bd}V_{max}^2}{A_{min}}-\xi_{7}-\frac{C_{3}^{bd}\xi_{8}}{A_{min}}\left( \frac{1}{\beta_{min}}+1\right)- \frac{
C_1
\xi_{9}V_{max}^2}{A_{min}^2}\right)
>\delta
$$
for some $\delta>0$,
and taking 
$C_{4}^{bd}$
 to be the global constant bound for \eqref{DONALD_BROWN:ineq4} we obtain 
\begin{align}
 k^2\TwoNorm{ u}{\dmn}^2 
 \leq
  \frac{C_{4}^{bd}}{\delta}
 \left(1+\frac{1}{k^2}\right)\left(\TwoNorm{f}{\dmn}^2+\TwoNorm{g}{\rob}^2\right),
\end{align}
and using \eqref{DONALD_BROWN:Lemmaest2}, and taking 
$C_{5}^{bd}$
 to be the global constant bound we obtain 
\begin{align}
 \TwoNorm{\nabla u}{\dmn}^2 
 \leq
C_{5}^{bd}
 \left(1+\frac{1}{k^2}\right)\left(\TwoNorm{f}{\dmn}^2+\TwoNorm{g}{\rob}^2\right),
\end{align}
as desired. \qed
\end{proof}

\bibliographystyle{amsplain}
\bibliography{HetHelmholtz}
\end{document}